\crefname{hypothesis}{Hypothesis}{Hypotheses}
\title{A robust two-level overlapping preconditioner for Darcy flow in high-contrast media\thanks{Submitted to the editors DATE.
\funding{
Jizu Huang's work is supported by National Key Research and Development Project of China (No. 2023YFA1011705) and National Natural Science Foundation of China (No. 12131002).
Eric Chung's work is partially supported by the Hong Kong RGC General Research Fund (Project numbers: 14305222 and 14304021). 
Shubin Fu's work is supported by start up funding of Eastern Institute for Advanced Study.
}}}
\author{Changqing Ye\thanks{Department of Mathematics, The Chinese University of Hong Kong, Shatin, Hong Kong SAR 
  (\email{cqye@math.cuhk.edu.hk}).}
\and Shubin Fu\thanks{Eastern Institute for Advanced Study, Ningbo, China 
  (\email{shubinfu89@gmail.com}, \email{shubinfu@eias.ac.cn}).}
\and Eric T. Chung\thanks{Department of Mathematics, The Chinese University of Hong Kong, Shatin, Hong Kong SAR (\email{tschung@math.cuhk.edu.hk}).}
\and Jizu Huang\thanks{LSEC, Academy of Mathematics and Systems Science, Chinese Academy of Sciences, Beijing 100190, China, and School of Mathematical Sciences, University of Chinese Academy of Sciences, Beijing 100049, China (\email{huangjz@lsec.cc.ac.cn}).}}
\newcommand{\dx}{\,\mathrm{d}}
\newcommand{\Real}{\mathbb{R}}
\DeclareMathOperator{\Div}{div}
\DeclareMathOperator{\Span}{span}
\DeclareMathOperator{\Cond}{cond}
\DeclareMathOperator{\Image}{im}
\DeclareMathOperator{\Card}{card}
\DeclarePairedDelimiter{\RoundBrackets}{(}{)}
\DeclarePairedDelimiter{\CurlyBrackets}{\{}{\}}
\DeclarePairedDelimiter{\DSquareBrackets}{\llbracket}{\rrbracket}
\DeclarePairedDelimiter{\abs}{\lvert}{\rvert}
\newcolumntype{L}[1]{>{\raggedright\let\newline\\\arraybackslash\hspace{0pt}}m{#1}}
\newcolumntype{C}[1]{>{\centering\let\newline\\\arraybackslash\hspace{0pt}}m{#1}}
\newcolumntype{R}[1]{>{\raggedleft\let\newline\\\arraybackslash\hspace{0pt}}m{#1}}
\newcommand{\INDSTATE}{\STATE\quad\quad} 
\begin{document}

\maketitle

\begin{abstract}
In this article, a two-level overlapping domain decomposition preconditioner is developed for solving linear algebraic systems obtained from simulating Darcy flow in high-contrast media. Our preconditioner starts at a mixed finite element method for discretizing the partial differential equation by Darcy's law with the no-flux boundary condition and is then followed by a velocity elimination technique to yield a linear algebraic system with only unknowns of pressure. Then, our main objective is to design a robust and efficient domain decomposition preconditioner for this system, which is accomplished by engineering a multiscale coarse space that is capable of characterizing high-contrast features of the permeability field. A generalized eigenvalue problem is solved in each non-overlapping coarse element in a communication-free manner to form the global solver, which are accompanied by local solvers originated from additive Schwarz methods but with a non-Galerkin discretization to derive the two-level preconditioner. We provide a rigorous analysis indicates that the condition number of the preconditioned system could be bounded above with several assumptions. Extensive numerical experiments with various types of three-dimensional high-contrast models are exhibited. In particular, we study the robustness against the contrast of the media as well as the influences of numbers of eigenfunctions, oversampling sizes, and subdomain partitions on the efficiency of the proposed preconditioner. Besides, strong and weak scalability performances are also examined.
\end{abstract}

\begin{keywords}
preconditioner, domain decomposition method, heterogeneous Darcy flow, multiscale space
\end{keywords}

\begin{MSCcodes}
65N55, 65F08, 65F10
\end{MSCcodes}

\section{Introduction}
Simulating subsurface fluids through porous media is important for many practical applications, such as reservoir simulation, nuclear water management, and groundwater contamination predictions. In subsurface modeling, properties of the medium such as permeability may vary by several orders of magnitude spatially and have complicated structures. Moreover, high-resolution models have become available due to advances in reservoir characterization techniques. All these factors impose prohibitively high computational burdens on existing simulators and have led to the developments of model reduction techniques such as upscaling \cite{Wu2002,Durlofsky1991,Arbogast2013}, in which people aim to homogenize high resolution models to obtain reduced ones that can be solved with permissible computational costs. Although upscaling has been successfully applied for considerable problems, it usually fails to resolve all small-scale information of the interested quantities. To overcome the bottlenecks of upscaling, various types of multiscale methods including the multiscale finite element method \cite{Hou1997} with its extensions the mixed multiscale finite element method \cite{Chen2003,Aarnes2004}, Generalized Multiscale Finite Element Methods (GMsFEM) \cite{Efendiev2013,Chung2015}, the multiscale finite volume method \cite{Lunati2006,Hajibeygi2009} and the multiscale mortar mixed finite element method \cite{Arbogast2007} have been proposed. The core idea of these methods is to solve target problems in coarse grids with carefully engineered multiscale basis functions, which are usually solutions of some well-designed sub-problems containing heterogeneity information of the original geological models. Hence, those multiscale methods can usually capture small-scale details with affordable computational resources. Nevertheless, the performance of multiscale methods may deteriorate if the permeability fields exhibit extremely high contrast and long channels \cite{Lunati2007}, which necessitates developing robust and efficient solvers for the original large-scale fine problems.

Direct solvers such as MUMPS \cite{Amestoy2001,Amestoy2019} apply a LU decomposition to the target matrix and then solve the linear system efficiently for different right-hand terms. However, in reservoir simulations, we need to update linear systems due to the change of coefficient profiles, which implies the scenario of solving multiple right-hand sides with one single matrix may not exist and the advantage of direct solvers is greatly diminished. Besides, direct solvers empirically require a huge amount of memory and perform poorly in terms of scalability. Therefore, direct solvers are not good candidates for large-scale flow simulations, which renders preconditioned iterative solvers a suitable choice. Classical two-level Domain Decomposition (DD) preconditioners have been implemented for various problems \cite{Toselli2005,Dolean2015}. In particular, the two-level restricted additive  Schwarz preconditioners combined with different solving strategies can greatly improve and expand the scope of application of the domain decomposition algorithm for large-scale flow problems \cite{Yang2019a,Yang2022,Li2020,Yang2023}. However,  the iteration numbers of standard DD preconditioners theoretically depend on the contrast ratio of the permeability field, as a result, the performance  for arbitrarily complicated media may not be guaranteed. One remedy is to replace classic polynomial-based coarse spaces with non-standard coarse spaces that embedded with heterogeneous models information \cite{Graham2007,Sarkis1997,Wang2014,Manea2016,Calvo2016,Kim2017,Dolean2012,Kim2018,Klawonn2015,Mandel2007,Galvis2010,Galvis2010a,Nataf2010,Nataf2011,Efendiev2012,Klawonn2016,Heinlein2018,Heinlein2019}. In \cite{Graham2007,Wang2014,Manea2016}, multiscale basis functions \cite{Hou1997} are utilized to construct two-level or two-grid preconditioners for highly heterogeneous incompressible flow models, it is also shown these preconditioners perform better than polynomial-based classic preconditioners. However, numerical experience suggests that these preconditioners are robust only if there are no long channels across coarse grids. To deal with more challenging models, the idea of using spectral functions was proposed and could be found in an extensive literature \cite{Calvo2016,Kim2017,Dolean2012,Kim2018,Klawonn2015,Mandel2007,Galvis2010,Galvis2010a,Nataf2010,Nataf2011,Efendiev2012,Klawonn2016,Heinlein2018,Heinlein2019}. It can now be confirmed from these works that utilizing eigenfunctions of some carefully designed local spectral problems is to some extent indispensable for constructing a robust and efficient preconditioners. However, most of these works intend to solve elliptic problems in second-order forms, few of them have been devoted to studying the elliptic problems in first-order forms, via which we can generate mass-conserving velocity fields that are valuable in long-time subsurface simulations.
Recently, a novel mixed GMsFEM \cite{Chen2020,He2021,He2021a} was proposed for solving steady state incompressible flows in the first-order form. This method contains several steps: (1) obtaining the diagonal velocity mass matrix using the trapezoidal quadrature rule; (2) eliminating the velocity field to derive a system with pressure as the only unknown; and (3) constructing multiscale basis functions for pressure field as a model reduction. However, it is observed in the numerical experiments, this method \cite{Chen2020} requires using many bases to achieve a high accuracy that is needed in solving the transport equation for two-phase flow simulations.

An efficient and robust two-grid preconditioner within the framework of mixed GMsFEM was proposed in \cite{Yang2019} for solving single-phase and two-phase incompressible heterogeneous flow problems. The robustness of this preconditioner lies in the utilization of eigenfunctions of well-designed spectral problems to form the coarse space. It is shown that the incorporation of eigenfunctions in coarse spaces can tremendously improve performance over standard two-grid preconditioners. However, the local spectral problems are defined in strongly overlapped subdomains in \cite{Yang2019}, which greatly affects the parallel efficiency.

In this paper, we develop a parallelized two-level overlapping preconditioner that is based on GMsFEM for solving the fine-scale pressure system. In each coarse element, a spectral problem is solved and eigenfunctions corresponding to those small eigenvalues are selected to form a coarse space and then a two-level preconditioner (the local solvers here are discretizations of zero-Dirichlet problems defined on overlapped subdomains) is established for the iterative solver. We emphasize that each local spectral problem is solved in a coarse element that has no overlaps with other coarse elements, which saves MPI inter-node communications and improves parallel efficiency compared with previous efforts \cite{Calvo2016,Kim2017,Dolean2012,Kim2018,Klawonn2015,Mandel2007,Galvis2010,Galvis2010a,Nataf2010,Nataf2011,Efendiev2012,Klawonn2016,Heinlein2018,Heinlein2019,Yang2019}. 
\alert{We here address preliminary results on robust DD preconditioners for the mixed form of Darcy flow problems in \cite{Efendiev2012}, where the authors also demonstrate extensions on Stokes’ and Brinkman’s equations. By introducing steam functions, they obtain a positive definite bilinear form that is suitable for constructions of eigenvalue problems. Our approach distinguishes from \cite{Efendiev2012} in that we sacrifice the pursuit of complete mixed form but focus on an alternative positive semidefinite bilinear form w.r.t.~pressure, which also allows us to obtain a velocity field that satisfies mass conservation. Additionally, our approach does not rely on partition functions, and the eigenvalue problems are formulated on a non-overlapping decomposition, resulting in a more straightforward implementation.}
Moreover, rich numerical experiments for several different types of highly heterogeneous models with a resolution up to $512^3$ are exhibited. In particular, we examine the robustness of our preconditioner against contrast ratios of media. Strong and weak scalability tests and performance comparisons with an out-of-the-box preconditioner from PETSc \cite{Balay2022a} are provided. Besides, the influence of numbers of eigenfunctions, oversampling sizes as well as subdomain partitions will be also investigated. The proposed two-level preconditioner shows an encouraging computational performance. Under several assumptions, we derive an a priori bound of condition numbers of preconditioned linear systems. Our analysis is featured for considering no-flux boundary conditions which result in singular linear systems and novel designs of local solvers.

The rest of this paper is arranged as follows. \Cref{sec:pre} introduces the basic model of subsurface flows, the definition of notations and fine-scale discretization. \Cref{sec:gms} illustrates a systematic way to construct the coarse space and the preconditioner. An analysis for the proposed preconditioner is presented in \cref{sec:anal}. Numerical results are shown in \cref{sec:num}. Finally, a conclusion is given.

\section{Preliminaries}\label{sec:pre}
\subsection{The model problem and the mixed finite element discretization}
We consider the following model problem for the unknown pressure field $p$ on a bounded Lipschitz domain $\Omega$ in $\Real^d$ where $d=2$ or $3$:
\begin{equation}\label{eq:ell1}
\left\{
\begin{aligned}
-\Div \RoundBrackets*{\kappa\nabla p}=&f \quad &&\text{in} \quad \Omega, \\
\kappa\nabla p\cdot\bm{n}=&0 \quad &&\text{on} \quad \partial\Omega,
\end{aligned}
\right.
\end{equation}
where $\partial \Omega$ is a Lipschitz continuous boundary and $\bm{n}$ is the unit outward normal vector to $\partial\Omega$. The scalar-valued coefficient $\kappa$ is the permeability of the porous medium which is highly heterogeneous in practical subsurface flow problems. By applying the divergence theorem to \cref{eq:ell1}, it is easy to validate that the source function $f$ should satisfy the so-called compatibility condition $\int_{\Omega} f\dx \bm{x}=0$. Moreover, to guarantee the uniqueness of the solution, an additional restriction $\int_{\Omega} p\dx \bm{x}=0$ should be imposed. In reservoir simulation, the flux variable
\[
\bm{v}=-\kappa\nabla p,
\]
is usually the quantity of interest. Then we can transform the second-order form \cref{eq:ell1} into a first-order form:
\begin{equation}\label{eq:orgional_equation}
\left\{
\begin{aligned}
\kappa^{-1}\bm{v}+\nabla p&=\bm{0} \quad \text{in} \quad \Omega,\quad &&\text{(Darcy's law)},\\
\Div(\bm{v})&=f \quad \text{in} \quad \Omega, \quad &&\text{(mass conservation)},\\
\bm{v}\cdot \bm{n}&=0 \quad \text{on} \quad \partial \Omega, \quad &&\text{(no-flux boundary condition)}.
\end{aligned}
\right.
\end{equation}
We choose the no-flux boundary condition because it is much more widely applied in reservoir simulation comparing with other types of boundary conditions.

We define
\[
\bm{V}\coloneqq\CurlyBrackets*{\bm{v}\in L^2(\Omega)^d\mid \Div(\bm{v})\in L^2(\Omega) \ \text{and} \  \bm{v}\cdot\bm{n}=0\;\mbox{on}\;\partial \Omega},
\]
and
\[
W\coloneqq\CurlyBrackets*{q\in L^2(\Omega)\mid \int_{\Omega}q\dx \bm{x}=0}.
\]
Then, multiplying Darcy's law by a test function $\bm{w}\in \bm{V}$ and the mass conservation equation by a $q\in W$, the weak form of \cref{eq:orgional_equation} is formulated by seeking $(\bm{v}, p)\in \bm{V}\times W$ satisfying the equations:
\begin{equation}\label{eq:weak}
\begin{aligned}
\int_\Omega\kappa^{-1}\bm{v}\cdot \bm{w} \dx \bm{x}-\int_\Omega\Div(\bm{w})p\dx \bm{x} &=0, &&\forall \bm{w}\in \bm{V},\\
-\int_\Omega\Div(\bm{v})q \dx \bm{x} &=-\int_\Omega fq\dx \bm{x},&& \forall q \in W.
\end{aligned}
\end{equation}
Let $\bm{V}_h\subset \bm{V}$ and $ W_h\subset W$ be two finite element spaces associated with a prescribed triangulation $\mathcal{T}_h$ of $\Omega$. Then the discrete problem becomes finding $(\bm{v}_h,p_h)\in \bm{V}_h\times W_h$ such that
\begin{equation}\label{eq:weak2}
\begin{aligned}
\int_\Omega\kappa^{-1}\bm{v}_h\cdot \bm{w}_h\dx \bm{x}-\int_\Omega \Div(\bm{w}_h)p_h\dx \bm{x}&=0, &&\forall \bm{w}_h\in \bm{V}_h,\\
-\int_\Omega\Div(\bm{v}_h)q_h \dx \bm{x}&=-\int_\Omega fq_h \dx \bm{x}, &&\forall q_h \in W_h.
\end{aligned}
\end{equation}
There are a number of well-known families of mixed finite element spaces \cite{Auricchio2017} of $\bm{V}_h\times W_h$, here we use the lowest-order Raviart-Thomas finite element spaces (cf. \cite{Boffi2013}) which is denoted by $\text{RT}_0$ in this paper. We express the solution $(\bm{v}_h, p_h)$ as
\[
\bm{v}_h=\sum_{e\in\mathcal{E}^0_h}v_e\bm{\phi}_e \quad \text{and} \quad p_h=\sum_{\tau\in\mathcal{T}_h}p_\tau q_\tau,
\]
where $\mathcal{E}^0_h$ is the set of internal edges/faces, $\CurlyBrackets*{\bm{\phi}_e}_{e\in \mathcal{E}^0_h}$ and $\CurlyBrackets*{q_\tau}_{\tau\in \mathcal{T}_h}$ are the bases\footnote{Generally, those bases should depend on $h$, and we drop it here for simplicity.} of $\bm{V}_h$ and $W_h$. Then algebraic representation of \cref{eq:weak2} is
\begin{equation}\label{eq:fine_system}
\begin{pmatrix}
\mathsf{M} & -\mathsf{B}^\intercal \\
-\mathsf{B} & \mathsf{0}\\
\end{pmatrix}
\begin{pmatrix}
\mathsf{v} \\ \mathsf{p}
\end{pmatrix}=
\begin{pmatrix}
0\\-\mathsf{f}
\end{pmatrix},
\end{equation}
where $\mathsf{M}$ is a symmetric, positive definite matrix, $\mathsf{v}$, $\mathsf{p}$ and $\mathsf{f}$ are column vectors.

\subsection{The velocity elimination technique}\label{subsec:vet}
Designing an efficient preconditioner for solving the saddle point system \cref{eq:fine_system} is not an easy task. Fortunately, it is shown in \cite{Arbogast1997} that if the trapezoidal quadrature rule is applied to compute the integration $\int_{\Omega}\kappa^{-1}\bm{v}_h\cdot\bm{w}_h\dx \bm{x}$ on each element rather than the accurate rule, then we could obtain a diagonal velocity mass matrix $\tilde{\mathsf{M}}$ that can be inverted straightforwardly. Therefore, the unknown $\mathsf{v}$ in \cref{eq:weak2} can be eliminated and \cref{eq:fine_system} boils down to
\begin{equation}\label{eq:p}
\mathsf{A}\mathsf{p}=\mathsf{f},
\end{equation}
where $\mathsf{A}\coloneqq\mathsf{B}\tilde{\mathsf{M}}^{-1}\mathsf{B}^\intercal$ is a symmetric and semi-positive definite matrix. We call this technique as velocity elimination, the equivalence between velocity elimination and a cell-centered finite difference scheme in rectangular meshes can be referred in \cite{Arbogast1997}.

Since the velocity elimination technique is closely related with the construction of our preconditioner, we will restate it in detail here. For simplicity, we consider a rectangular $(d=2)$ domain $\Omega=(0, L_x)\times(0, L_y)$\footnote{In previous equations $\bm{x}$ is the vector-valued spatial variable (such as \cref{eq:weak,eq:weak2}), while here we also use subscripts $x$ and $y$ for $x$- and $y$-direction.} with a uniform structured mesh $M_x\times M_y$, which says that each element is of the size of $h_x\times h_y$ with $h_x=L_x/M_x$ and $h_y=L_y/M_y$. Note that this mesh should be at the finest resolution and $\kappa$ should be element-wisely constant. For an element $\tau=(i, i+1)h_x\times(j, j+1)h_y$ with $0\leq i < M_x$ and $0 \leq j < M_y$, the trapezoidal quadrature rule gives
\[
\begin{aligned}
&\int_{\tau} \kappa^{-1} \bm{v}_h\cdot\bm{w}_h \dx \bm{x} \\
\approx &\frac{\RoundBrackets*{\kappa|_\tau}^{-1}}{4}\CurlyBrackets*{(\bm{v}_h\cdot\bm{w}_h)(p_1)+(\bm{v}_h\cdot\bm{w}_h)(p_2)+(\bm{v}_h\cdot\bm{w}_h)(p_3)+(\bm{v}_h\cdot\bm{w}_h)(p_4)} h_xh_y,
\end{aligned}
\]
where $\kappa|_\tau$ is the constant value of $\kappa$ in $\tau$, $p_1,\dots,p_4$ are the four corner points of $\tau$. According to the definition of $\bm{\phi}_e$ (cf. \cite{Boffi2013}), for $p_1=(ih_x, jh_y)$, we have $(\bm{v}_h\cdot\bm{w}_h)(p_1)=v_{e_1}w_{e_1}+v_{e_2}w_{e_2}$, where $e_1$ and $e_2$ are the edges $\CurlyBrackets{ih_x}\times (j, j+1)h_y$ and $(i, i+1)h_x\times \CurlyBrackets{jh_y}$ respectively. We can handle $(\bm{v}_h\cdot\bm{w}_h)(p_2)$, $(\bm{v}_h\cdot\bm{w}_h)(p_3)$ and $(\bm{v}_h\cdot\bm{w}_h)(p_4)$ similarly and then obtain an approximation of $\int_\Omega\kappa^{-1}\bm{v}_h\cdot \bm{w}_h\dx \bm{x}$ as $h_x h_y\sum_{e\in \mathcal{E}_h^0} \kappa_e^{-1} v_ew_e$ with $\kappa_e$ is the \emph{harmonic average} of $\kappa_{e,+}$ and $\kappa_{e,-}$, where $\kappa_{e,+}$ and $\kappa_{e,-}$ are two values of $\kappa$ in the two adjacent elements along the internal edge $e$. Note that $\int_\Omega\kappa^{-1}\bm{v}_h\cdot \bm{w}_h\dx \bm{x} \approx h_x h_y\sum_{e\in \mathcal{E}_h^0} \kappa_e^{-1} v_ew_e$ actually provides us the expression of $\tilde{\mathsf{M}}$. For $\int_\Omega \Div(\bm{w}_h)p_h\dx \bm{x}$, recalling that $p_h$ is also element-wisely constant and applying the divergence theorem for each element, we have
\[
\begin{aligned}
\int_\Omega \Div(\bm{w}_h)p_h\dx \bm{x}&=\sum_{\tau\in\mathcal{T}_h}\int_\tau \Div(\bm{w}_h)p_h\dx \bm{x} \\
&= \sum_{\tau\in\mathcal{T}_h} p_h|_\tau \int_\tau \Div(\bm{w}_h)\dx \bm{x}=\sum_{\tau\in\mathcal{T}_h} p_h|_\tau\int_{\partial \tau} \bm{w}_h\cdot \bm{n}\dx \sigma \\
&=-\sum_{e\in\mathcal{E}_h^0} \abs{e}\DSquareBrackets{p_h}_ew_e,
\end{aligned}
\]
where $\abs{e}$ is the length of $e$ and $\DSquareBrackets{p_h}_e$ is the jump of $p_h$ across $e$ towards the positive $x$- or $y$-direction. We hence cancel out $w_e$ via $h_x h_y\sum_{e\in \mathcal{E}_h^0} \kappa_e^{-1} v_ew_e+\sum_{e\in\mathcal{E}_h^0} \abs{e}\DSquareBrackets{p_h}_ew_e=0$ and obtain $v_e=-\kappa_e\abs{e}\DSquareBrackets{p_h}_e/(h_xh_y)$. Moreover, combining $\int_\Omega \Div(\bm{v}_h)q_h\dx \bm{x}=-\sum_{e\in\mathcal{E}_h^0} \abs{e}\DSquareBrackets{q_h}_ev_e$, we could derive a variational form expression of \cref{eq:p} as
\begin{equation}\label{eq:varia vet}
\sum_{e\in\mathcal{E}_h^0} \kappa_e \DSquareBrackets{p_h}_e\DSquareBrackets{q_h}_e\frac{\abs{e}^2}{h_xh_y} = \int_\Omega f q_h \dx \bm{x}, \quad \forall q_h \in W_h.
\end{equation}
\alert{For $d=3$, the summation $\sum_{e\in\mathcal{E}_h^0}$ in \cref{eq:varia vet} should be interpreted as the sum over all internal element faces, and $\DSquareBrackets{\cdot}_e$ represents the jump across the face $e$. In contrast to the no-flux boundary condition, for zero-Dirichlet boundary value problems, the velocity across the boundary is not known priorly. Therefore, following the velocity technique, additional terms will be included in \cref{eq:varia vet}. Although the precise formulation is omitted here for the sake of simplicity.}

Once the pressure unknowns $\mathsf{p}$ are available by solving \cref{eq:p}, the velocity unknowns $\mathsf{v}$ can be recovered via $\mathsf{v}=\tilde{\mathsf{M}}^{-1}\mathsf{B}^\intercal \mathsf{p}$. We emphasize that the velocity field $\bm{v}_h$ computed here is element-wisely mass conservative, which is crucial for long-time subsurface flow modeling.

\subsection{Two-level overlapping preconditioners}
In this subsection, the idea of two-level overlapping preconditioners (ref. \cite{Toselli2005}) for solving \cref{eq:p} is briefly presented. Assume the domain $\Omega$ is divided into a union of \emph{disjoint} polygonal subdomains denoted by $\CurlyBrackets*{K_i}_{i=1}^N$ with each $K_i$ consisting of fine elements from $\mathcal{T}_h$. Therefore, we obtain a coarse ``triangulation'' of $\Omega$ that is denoted by $\mathcal{T}_H$, and we will also call a subdomain $K_i$ as a coarse element. Let $\CurlyBrackets*{K_i^m}_{i=1}^N$ be an overlapping partition by extending $m$ layers of fine elements to each coarse element $K_i$. In later presentation, we also name $K_i^m$ as an oversampling coarse element, and the term ``oversampling'' is coined in the oversampling multiscale finite element method \cite{Hou1997}. \Cref{fig:grid} is an illustration of the two-scale mesh, a fine element $\tau$, a coarse element $K_i$ and its oversampling coarse element $K_i^m$ with $m=2$.

\begin{figure}[tbhp]
\centering
\begin{tikzpicture}[scale=1.5]
\draw[step=0.25, gray, thin] (-0.4, -0.4) grid (4.4, 4.4);
\draw[step=1.0, black, very thick] (-0.4, -0.4) grid (4.4, 4.4);
\foreach \x in {0,...,4}
\foreach \y in {0,...,4}{
\fill (1.0 * \x, 1.0 * \y) circle (1.5pt);
}
\fill[brown, opacity=0.4] (1.0, 1.0) rectangle (2.0, 2.0);
\node at (1.5, 1.5) {$K_i$};
\draw [dashed, very thick, fill=gray, opacity=0.6] (0.5, 0.5) rectangle (2.5, 2.5);
\node[above right] at (2.0, 2.0) {$K_i^2$};
\draw [dashed, very thick, fill=cyan, opacity=0.5] (3.25, 1.25) rectangle (3.5, 1.5);
\node at (3.375, 1.375) {$\tau$};
\end{tikzpicture}
\caption{An illustration of the two-scale mesh, a fine element $\tau$, a coarse element $K_i$ and its oversampling coarse element $K_i^m$ with $m=2$.}
\label{fig:grid}
\end{figure}
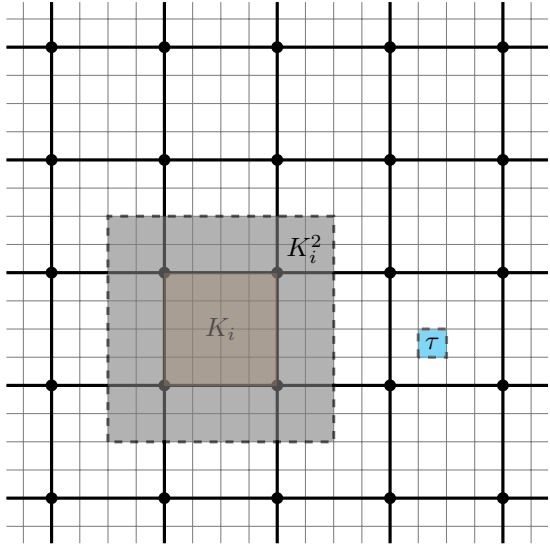

We introduce coarse basis functions $\CurlyBrackets*{\Phi_i}_{i=1}^{N^\text{c}}$ associated with the coarse mesh $\mathcal{T}_H$, where $N^\text{c}$ is the number of bases, and $N^\text{c}$ in generally is different from the number of coarse elements $N$. Note that each basis should be resolved on the fine mesh $\mathcal{T}_h$, and we can hence represent the coarse space as
\[
W_H^\text{c} \coloneqq \Span\CurlyBrackets*{\Phi_i}_{i=1}^{N^\text{c}}
\]
and its projection to the fine space as a prolongation matrix $\mathsf{R}_0^\intercal$ with each column of $\mathsf{R}_0^\intercal$ corresponding to a coarse basis. The coarse space can simply be $\text{RT}_0$ space for pressure on the coarse mesh $\mathcal{T}_H$, it can also be a \emph{generalized multiscale finite element space}. After determining $W_H^\text{c}$, the matrix of the global solver can be obtained by $\mathsf{A}_0\coloneqq \mathsf{R}_0 \mathsf{A}\mathsf{R}_0^\intercal$, where the size of $\mathsf{A}_0$ is $N^\text{c}\times N^\text{c}$. We emphasize that constructing a low-dimensional but effective coarse space is pivotal in designing two-level preconditioners.

An indispensable component in two-level preconditioners is so-called local solvers. In our paper, the local solvers are built from discretizing zero-Dirichlet problems defined on \emph{oversampling} coarse elements $\CurlyBrackets*{K_i^m}_{i=1}^N$. That is, we solve the following equation on the fine mesh $\mathcal{T}_h$ via the $\text{RT}_0$ method:
\begin{equation}\label{eq:local}
\left\{
\begin{aligned}
\kappa^{-1}\bm{v}+\nabla p &= \bm{0} \quad &&\text{in} \quad K_i^m,\\
\nabla\cdot \bm{v} &= f \quad &&\text{in} \quad K_i^m, \\
p &= 0 \quad && \text{on} \quad \partial K_i^m \cap \Omega, \\
\bm{v}\cdot \bm{n} &= 0 \quad && \text{on} \quad \partial K_i^m \cap \partial\Omega.
\end{aligned}
\right.
\end{equation}
Note that we utilize the velocity elimination technique again to obtain a linear system that is only related to pressure. We denote $\mathsf{A}_i$ the matrix of the local solver corresponding to $K_i^m$, and $\mathsf{R}_i$ the restriction matrix that restricts the Degrees of Freedom (DoF) of pressure from $\Omega$ to $K_i^m$. Note that $\mathsf{A}_i$ is not equal to $\mathsf{R}_i \mathsf{A} \mathsf{R}_i^\intercal$, and this fact is different from the discretization by the Lagrange finite element method. Except for special situations (e.g., $\partial K_i^m \cap \Omega = \varnothing$), the matrix $\mathsf{A}_i$ is invertible, and we will always assume that $\mathsf{A}_i^{-1}$ exists.

The fine-scale linear system \cref{eq:p} will be solved by preconditioned iterative solvers such as Generalized Minimal Residual (GMRES) methods with a two-level overlapping preconditioner of the form:
\[
\mathsf{P}^{-1}=\mathsf{R}_0^\intercal \mathsf{A}_0^{\dagger}\mathsf{R}_0+\sum_{i=1}^{N}\mathsf{R}_i^\intercal \mathsf{A}_i^{-1}\mathsf{R}_i,
\]
where $\mathsf{A}_0^{\dagger}$ is the pseudoinverse of $\mathsf{A}_0$ because $\mathsf{A}_0$ is generally singular and ``$-1$'' of $\mathsf{P}^{-1}$ is just a conventional notation. The definition of this preconditioner indicates that it needs to solve a global system and multiple local problems in each iteration. In practice, we will factorize $\mathsf{A}_0$ and $\mathsf{A}_i$ during the preparation phase, which saves computing time on solving precondition systems in the latter iteration phase.

\section{GMsFEM based two-level overlapping preconditioner}\label{sec:gms}
The choice of the coarse space $W_H^\text{c}$ has a significant influence on the performance, and we will adopt the methodology of GMsFEMs to construct a coarse space that incorporates heterogeneity information of $\kappa$. Follow the simplification in \cref{subsec:vet}, we will still consider a rectangular domain with a uniform structured mesh, and the generalization to unstructured meshes is also possible (see \cite{He2021}). Meanwhile, the coarse mesh $\mathcal{T}_H$ is also structured and each coarse element $K_i$ is a rectangular subdomain.

In GMsFEMs \cite{Efendiev2013}, two bilinear forms $a_i(\cdot, \cdot)$ and $s_i(\cdot, \cdot)$ corresponding to the $i$-th subdomain\footnote{Subdomains $\CurlyBrackets*{\omega_i}$ are overlapped in \cite{Efendiev2013}, while coarse elements in our paper are all disjointed.} are important in selecting effective low-dimensional representations from snapshot spaces. According to \cref{eq:varia vet}, we define $a_i(\cdot, \cdot)$ as
\begin{equation}\label{eq:a_i_func}
a_i(q_h', q_h)\coloneqq \sum_{e\in \mathcal{E}_h^0(K_i)}\kappa_e\DSquareBrackets{q_h'}_e\DSquareBrackets{q_h}_e\frac{\abs{e}^2}{h_xh_y}, \quad \forall q_h,q_h'\in W_h(K_i),
\end{equation}
where $\mathcal{E}_h^0(K_i)$ is the internal edge set in the coarse element $K_i$ and $W_h(K_i)$ is the restriction space of $W_h$ on $K_i$. Due to that our discretization is not a Galerkin method, the expression of $a_i(\cdot,\cdot)$ is different from the original one in \cite{Efendiev2013}. We define $s_i(\cdot,\cdot)$ as
\begin{equation}\label{eq:s_i_func}
s_i(q_h',q_h) \coloneqq \int_{K_i}\tilde{\kappa} q_h' q_h \dx \bm{x}, \quad \forall q_h,q_h'\in W_h(K_i),
\end{equation}
\alert{where $\tilde{\kappa}$ must satisfy \textit{Assumption B} as described in \cref{sec:anal}. In addition, we explain in \cref{sec:num} that it is also reasonable to set $\tilde{\kappa}=\kappa$.}
Then, we solve the following spectral problem in each coarse element $K_i$:
\begin{equation}\label{eq:spepb}
\sum_{e\in \mathcal{E}_h^0(K_i)}\kappa_e\DSquareBrackets{\Phi_h}_e\DSquareBrackets{q_h}_e\frac{\abs{e}^2}{h_xh_y} = \lambda \int_{K_i}\tilde{\kappa} \Phi_h q_h \dx \bm{x}, \quad \forall q_h \in W_h(K_i),
\end{equation}
where $\Phi_h$ is an eigenvector corresponding to the eigenvalue $\lambda$. Note that \cref{eq:spepb} is the discretization of the spectral problem---find $\Phi \in H^1(K_i)$ such that $-\Div\RoundBrackets*{\kappa \nabla \Phi}=\lambda\tilde{\kappa} \Phi$ in $K_i$---via $\text{RT}_0$ elements and the velocity elimination technique. These eigenvectors supported in coarse elements are solutions of local spectral problems that contain fine-scale information $\kappa$, therefore we also call them multiscale basis functions. After solving the eigenvalue problem \cref{eq:spepb}, eigenvectors $\CurlyBrackets{\Phi_{h,i}^j}_{j=1}^{L_i}$ corresponding to the $L_i$ smallest eigenvalues will be collected to form the local coarse space $W_H^\text{c}(K_i)$ as $W_H^\text{c}(K_i)=\Span\CurlyBrackets{\Phi_{h,i}^j:j=1,\dots,L_i}$. Then, the global coarse space $W_H^c$ is the direct sum\footnote{Two different local coarse space are associated to two different subdomains, which says the direct sum operation is inappropriate. Strictly, the direct sum here should implicitly come after performing zero-extensions of bases to the whole domain.} of $W_H^c(K_i)$ as $W_H^\text{c}=\bigoplus_{i=1}^N W_H^\text{c}(K_i)$. The dimension of $W_H^c$ equals the total number of eigenvectors, that is $N^\text{c}=\sum_{i=1}^{N} L_i$.  We emphasize again that \cref{eq:spepb} is solved in each coarse element $K_i$ that has no overlap with its neighbors, and therefore no communication is required when adopting parallel computing. Comparing with previous efforts \cite{Galvis2010,Galvis2010a,Nataf2010,Nataf2011}, in which local problems are solved in overlapping subdomains, the advantage of our method in parallel computing implementation is remarkable.

\begin{remark}
\alert{
In the original paper on GMsFEMs \cite{Efendiev2013}, the authors propose the use of $\kappa$-harmonic spaces as snapshot spaces, which involves solving the spectral problem in \cref{eq:spepb} on a subspace of $W_h(K_i)$. In our paper, we advocate for the approach in \cref{eq:spepb} for eigenvalue problems, which could be understood by utilizing full $W_h(K_i)$ as snapshot spaces. While using $\kappa$-harmonic spaces can reduce the degrees of freedom in spectral problems, it can also result in dense eigenvalue systems. Furthermore, implementing the use of $\kappa$-harmonic spaces is more complex, making it challenging to assess the actual performance improvement. Additionally, our numerical findings indicate that in parallel computing scenarios, the time spent on spectral problems does not significantly contribute to the overall computational time.
}
\end{remark}
The bilinear form $s_i(\cdot,\cdot)$ is symmetric and positive definite, while $a_i(\cdot,\cdot)$ is symmetric but always semi-definite, which implies the first eigenvalue of \cref{eq:spepb} is always $0$. Moreover, it is easy to see there is a unique and normalized eigenvector corresponding to the eigenvalue $0$, and this eigenvector is exactly the constant vector. Note that the $\text{RT}_0$ spaces $\mathbf{V}_H\times W_H$ on the coarse mesh $\mathcal{T}_H$, bases of the pressure part $W_H$ are also piece-wisely constant. Then, our coarse space can be viewed as an enrichment of $W_H$, which is a standard coarse space in two-grid preconditioners \cite{Briggs2000}. Recall that the no-flux boundary condition is imposed for our model problem \cref{eq:ell1}, and the matrix $\mathsf{A}$ of the fine-scale linear system \cref{eq:p} is actually singular with constant vectors as its kernel. Note that the coarse space $W_H^\text{c}$ also contains the kernel of $\mathsf{A}$, and this implies that $W_H^\text{c}$ inherits some physical information of the original system and can be interpreted as an appropriate upscaling of the fine-scale space.

\Cref{fig:basis} demonstrates eigenvalues computed by setting different permeability profiles. In the left column, two types of media (3- and 5-channel configurations) that contain long channels are visualized, and we will fix $\kappa\equiv 1$ in the background region (purple) and $\kappa \equiv \kappa^*$ in channels (yellow), where $\kappa^*$ is from $\CurlyBrackets*{10^0,10^1,10^2,10^3,10^4}$ to see the effect of contrast ratios. We plot the $11$ smallest eigenvalues of \cref{eq:spepb} with $\tilde{\kappa}=\kappa$ in the middle column, with $\tilde{\kappa}=1$ in the right column, where plots in the first row are corresponding to the 3-channel configuration and the second row is for the 5-channel configuration accordingly. We can observe the patterns of eigenvalues for $\tilde{\kappa}=\kappa$ and $1$ are significantly different. Checking vertical axes of the plots in the right and middle columns, eigenvalues corresponding to $\tilde{\kappa}=1$ seem to be sensitive to different configurations, while eigenvalues to $\tilde{\kappa}=\kappa$ are more stable. An interesting phenomenon could be noticed in the middle column is for the 3-channel (5-channel) configuration, the $3$ ($5$) smallest eigenvalues will approach to $0$ as increasing contrast ratios. Actually, it is shown mathematically that in \cite{Galvis2010}, if there are $m$ channels, the $m$ smallest eigenvalues may approach to $0$ as increasing contrast ratios, which is not favored in building robust preconditions. Generally, We need to include enough eigenvectors into our coarse space, and the number of channels may provide a guidance. However, for a complex medium, it is almost impossible to have a priori knowledge about the precise number of channels. In practice, we may preset a number or choose eigenvectors adaptively such that the largest eigenvalue crosses a certain threshold.

\begin{figure}[tbhp]
\centering
\includegraphics[width=\textwidth]{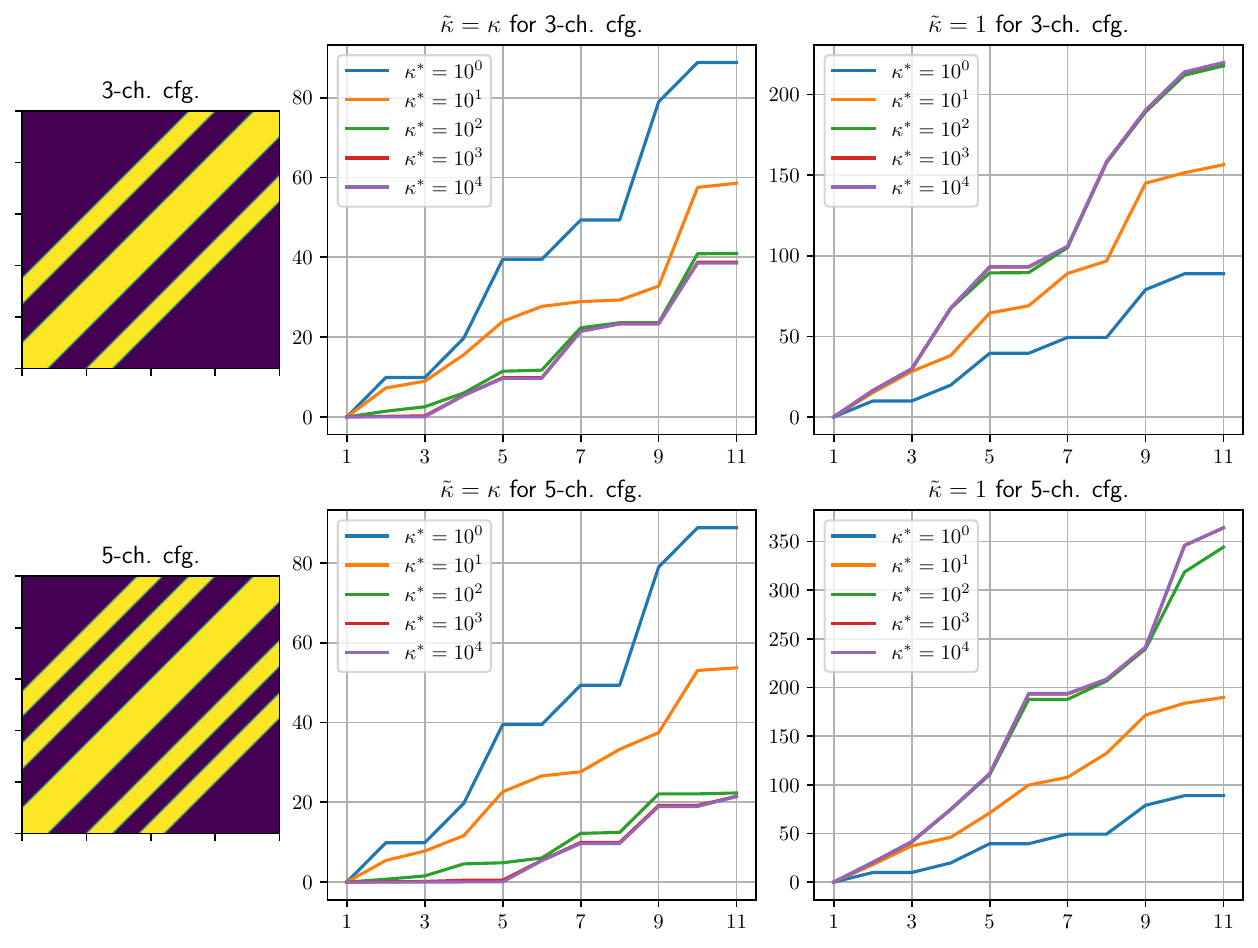}
\caption{(\textbf{left column}) the 3- and 5-channel configurations; (\textbf{middle column}) the $11$ smallest eigenvalues calculated by setting $\tilde{\kappa}=\kappa$ w.r.t. different $\kappa^*$ that is the value of $\kappa$ in channels, and the top (bottom) plot is corresponding to the 3-channel (5-channel) configuration; (\textbf{right column}) the $11$ smallest eigenvalues calculated by setting $\tilde{\kappa}=1$ w.r.t. different $\kappa^*$, and the top (bottom) plot is corresponding to the 3-channel (5-channel) configuration.}
\label{fig:basis}
\end{figure}

We summarize our method of constructing the preconditioner in \cref{alg:preconditioner}. Basically, the method consists of two phases---preparation and iteration. Note that every step listed in \cref{alg:preconditioner} could be accelerated in a parallel framework. Although the exact iteration phase depends on the iterative solver, we just focus on applying $\mathsf{P}^{-1}\mathsf{r}$, where $\mathsf{r}$ is usually a residual vector.

\begin{algorithm}
\caption{GMsFEM based two-level overlapping preconditioner}
\label{alg:preconditioner}
\begin{algorithmic}[1]
\STATE{\textbf{Preparation phase}:}
\INDSTATE{Set up the decomposition $\mathcal{T}_H$ for the domain $\Omega$; load the coefficient profile $\kappa$; set the source term $f$.}
\INDSTATE{Construct the matrix $\mathsf{A}$ and the right-hand vector $\mathsf{f}$ of \cref{eq:p}.}
\INDSTATE{For each oversampling coarse element $K_i^m$, construct the matrix $\mathsf{A}_i$ by discretizing \cref{eq:local} via velocity elimination; factorize $\mathsf{A}_i$.}
\INDSTATE{For each coarse element $K_i$, solve the spectral problem \cref{eq:spepb} and obtain $L_i$ eigenvectors; form the coarse space $W_H^\text{c}$ or the prolong matrix $\mathsf{R}_0^\intercal$.}
\INDSTATE{Construct the matrix $\mathsf{A}_0$ via the coarse space $W_H^\text{c}$; factorize $\mathsf{A}_0$.}

\STATE{\textbf{Iteration phase} (apply $\mathsf{P}^{-1}\mathsf{r}$):}
\INDSTATE{Restrict $\mathsf{r}$ to the oversampling coarse element $K_i^m$ and obtain $\mathsf{r}_i$; use factorized $\mathsf{A}_i$ to get $\mathsf{A}_i^{-1}\mathsf{r}_i$; sum all $\mathsf{A}_i^{-1}\mathsf{r}_i$ according to DoF in the fine mesh.}
\INDSTATE{Project $\mathsf{r}$ to the coarse space $W_H^\textsc{c}$ and obtain $\mathsf{r}_0=\mathsf{R}_0\mathsf{r}$; use factorized $\mathsf{A}_0$ to get $\mathsf{A}_0^{\dagger}\mathsf{r}_0$; project the solution $\mathsf{A}_0^{\dagger}\mathsf{r}_0$ back to the fine space with the coarse bases.}
\INDSTATE{Take a summation of the resulting vectors in the aforementioned two steps and obtain $\mathsf{P}^{-1}\mathsf{r}$.}
\end{algorithmic}
\end{algorithm}

\section{Analysis}\label{sec:anal}
For iterative solvers, a smaller iteration number usually means a better performance\footnote{Actually, the performance or the elapsed time for a preconditioned iterative solver cannot be solely determined by numbers of iterations. To see this, just assume the precondition system is the original system, then only one iteration is needed while there is no acceleration benefiting from preconditioning at all.}, and numbers of iterations are dramatically affected by the condition number of the preconditioned matrix \cite{Saad2003}. However, the canonical definition of condition numbers is for invertible matrices, while the matrix $\mathsf{A}$ in \cref{eq:p} is singular. According to \cite{Brown1997}, for a linear system $\mathsf{M}\mathsf{x}=\mathsf{b}$, if the right-hand vector $\mathsf{b}$ belongs to $\Image(\mathsf{M})$ the image of $\mathsf{M}$, then GMRES can safely find a least-squares solution or the pseudoinverse solution. Moreover, the paper \cite{Brown1997} suggests a redefinition of the condition number for $\mathsf{A}$ as
\[
\Cond(\mathsf{A}) = \frac{\sigma_{\max}(\mathsf{A})}{\sigma_2(\mathsf{A})},
\]
where $\sigma_{\max}(\mathsf{A})$ is the largest singular value of $\mathsf{A}$ and $\sigma_2(\mathsf{A})$ is the second least singular value. Hence, this section aims to show $\Cond(\mathsf{P}^{-1}\mathsf{A})$ can be bounded above with suitable assumptions.

The main tool is the so-called fictitious space lemma (ref. \cite{Griebel1995}). Before presenting this lemma, we introduce several notations. Let $n$ be the number of total DoF of pressure, $n_i$ be the number of DoF of pressure restricted on the coarse element $K_i$, and $n_i^m$ be the number of DoF of pressure restricted on the oversampling coarse element $K_i^m$, $\underline{V}$ be the product space $\Real^{N^\text{c}}\times \Real^{n_1^m} \times \dots \times \Real^{n_N^m}$. Then we have $\mathsf{A} \in \Real^{n\times n}$, and $\mathsf{A}_i \in \Real^{n_i^m\times n_i^m}, \mathsf{R}_i \in \Real^{n_i^m \times n}$ for $i=1,\dots,N$. We define $\mathsf{a}(\cdot, \cdot)$ as a symmetric, positive semi-definite bilinear form on $\Real^n$ by
\[
\mathsf{a}(\mathsf{v}, \mathsf{w}) = \mathsf{v} \cdot \mathsf{A} \mathsf{w}, \quad \forall \mathsf{v}, \mathsf{w} \in \Real^n,
\]
which is exactly the left-hand section of \cref{eq:varia vet} in a linear algebraic form. Recall that the dimension of the coarse space $W_H^\text{c}$ is $N^\text{c}$, which gives $\mathsf{A}_0\in \Real^{N^\text{c} \times N^\text{c}}$ and $\mathsf{R}_0 \in \Real^{N^\text{c}\times n}$. The fictitious space lemma is stated below, and we avoid using abstract Hilbert spaces here.

\begin{lemma} \label{lem:fsl}
Let $\mathsf{b}(\cdot,\cdot)$ be a symmetric, positive semi-definite bilinear form on $\underline{V}$ with a matrix representation as
\[
\mathsf{b}(\underline{\mathsf{v}}, \underline{\mathsf{w}}) = \underline{\mathsf{v}} \cdot \underline{\mathsf{B}} \,\underline{\mathsf{w}}, \quad \forall \underline{\mathsf{v}}, \underline{\mathsf{w}} \in \underline{V},
\]
$\mathcal{R}:\underline{V} \rightarrow \Real^n$ be a linear map, and $\mathcal{R}^*:\Real^n\rightarrow \underline{V}$ be an adjoint operator by
\[
\mathcal{R}\underline{\mathsf{v}} \cdot \mathsf{w} = \underline{\mathsf{v}} \cdot \mathcal{R}^* \mathsf{w}, \quad \forall \underline{\mathsf{v}} \in \underline{V}, \mathsf{w} \in \Real^n.
\]
Assume that $\mathcal{R}$ maps $\ker(\underline{\mathsf{B}})$ into $\ker(\mathsf{A})$, and there are positive constants $C_T$ and $C_R$, such that for all $\mathsf{u} \in \Real^n$ there exists a $\underline{\mathsf{v}} \in \underline{V}$ with $\mathsf{u}=\mathcal{R}\underline{\mathsf{v}}$ and
\begin{equation}\label{eq:fsl1}
C_T\, \mathsf{b}(\underline{\mathsf{v}}, \underline{\mathsf{v}}) \leq \mathsf{a}(\mathsf{u}, \mathsf{u}),
\end{equation}
and also
\begin{equation}\label{eq:fsl2}
\mathsf{a}(\mathcal{R}\underline{\mathsf{w}}, \mathcal{R}\underline{\mathsf{w}}) \leq C_R\, \mathsf{b}(\underline{\mathsf{w}}, \underline{\mathsf{w}}),\quad \forall \underline{\mathsf{w}} \in \underline{V}.
\end{equation}
Then for all $\mathsf{u} \in \Real^n$ with $\mathsf{a}(\mathsf{u}, \mathsf{u}) > 0$,
\[
C_T\, \mathsf{a}(\mathsf{u}, \mathsf{u}) \leq \mathsf{a}(\mathcal{R}\underline{\mathsf{B}}^\dagger\mathcal{R}^*\mathsf{A}\mathsf{u}, \mathsf{u}) \leq C_R\, \mathsf{a}(\mathsf{u}, \mathsf{u}),
\]
where $\underline{\mathsf{B}}^\dagger$ is the pseudoinverse of $\underline{\mathsf{B}}$.
\end{lemma}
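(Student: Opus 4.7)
The plan is to adapt the classical proof of the (positive-definite) fictitious space lemma to the present semi-definite setting, handling the hypothesis $\mathcal{R}(\ker(\underline{\mathsf{B}}))\subset\ker(\mathsf{A})$ explicitly to legitimize the pseudoinverse $\underline{\mathsf{B}}^\dagger$. Write $\mathcal{K}\coloneqq \mathcal{R}\underline{\mathsf{B}}^\dagger\mathcal{R}^*\mathsf{A}$; a direct computation shows that $\mathcal{K}$ is self-adjoint with respect to $\mathsf{a}(\cdot,\cdot)$, so it suffices to pinch the quadratic form $\mathsf{a}(\mathcal{K}\mathsf{u},\mathsf{u})$ between $C_T\mathsf{a}(\mathsf{u},\mathsf{u})$ and $C_R\mathsf{a}(\mathsf{u},\mathsf{u})$ for every $\mathsf{u}$ with $\mathsf{a}(\mathsf{u},\mathsf{u})>0$.

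Setting $\mathsf{g}\coloneqq \mathcal{R}^*\mathsf{A}\mathsf{u}$ and using the adjoint relation together with the symmetry of $\mathsf{A}$, I would first verify the algebraic identities $\mathsf{a}(\mathcal{K}\mathsf{u},\mathsf{u}) = \mathsf{g}^T\underline{\mathsf{B}}^\dagger\mathsf{g}$ and $\underline{\mathsf{w}}^T\mathsf{g} = \mathsf{a}(\mathcal{R}\underline{\mathsf{w}},\mathsf{u})$ for every $\underline{\mathsf{w}}\in\underline{V}$. The crucial intermediate observation is $\mathsf{g}\in\Image(\underline{\mathsf{B}})=\ker(\underline{\mathsf{B}})^\perp$: for any $\underline{\mathsf{z}}\in\ker(\underline{\mathsf{B}})$, the hypothesis $\mathcal{R}\underline{\mathsf{z}}\in\ker(\mathsf{A})$ gives $\underline{\mathsf{z}}^T\mathsf{g}=(\mathcal{R}\underline{\mathsf{z}})^T\mathsf{A}\mathsf{u}=0$. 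Diagonalizing $\underline{\mathsf{B}}$ and applying Cauchy--Schwarz componentwise then yields the Rayleigh--Ritz-type formula
\[
\mathsf{g}^T\underline{\mathsf{B}}^\dagger\mathsf{g}=\sup_{\underline{\mathsf{w}}\,:\,\mathsf{b}(\underline{\mathsf{w}},\underline{\mathsf{w}})>0}\frac{\mathsf{a}(\mathcal{R}\underline{\mathsf{w}},\mathsf{u})^2}{\mathsf{b}(\underline{\mathsf{w}},\underline{\mathsf{w}})}.
\]

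With this identity in hand, the two bounds follow by short arguments. For the upper bound, Cauchy--Schwarz in the seminorm induced by $\mathsf{a}(\cdot,\cdot)$ combined with \cref{eq:fsl2} gives
\[
\mathsf{a}(\mathcal{R}\underline{\mathsf{w}},\mathsf{u})^2 \leq \mathsf{a}(\mathcal{R}\underline{\mathsf{w}},\mathcal{R}\underline{\mathsf{w}})\,\mathsf{a}(\mathsf{u},\mathsf{u}) \leq C_R\,\mathsf{b}(\underline{\mathsf{w}},\underline{\mathsf{w}})\,\mathsf{a}(\mathsf{u},\mathsf{u}),
\]
so taking the supremum delivers $\mathsf{a}(\mathcal{K}\mathsf{u},\mathsf{u})\leq C_R\,\mathsf{a}(\mathsf{u},\mathsf{u})$. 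For the lower bound, \cref{eq:fsl1} supplies $\underline{\mathsf{v}}$ with $\mathcal{R}\underline{\mathsf{v}}=\mathsf{u}$ and $C_T\mathsf{b}(\underline{\mathsf{v}},\underline{\mathsf{v}})\leq \mathsf{a}(\mathsf{u},\mathsf{u})$; because $\mathsf{a}(\mathsf{u},\mathsf{u})>0$, the kernel hypothesis precludes $\underline{\mathsf{v}}\in\ker(\underline{\mathsf{B}})$, so $\mathsf{b}(\underline{\mathsf{v}},\underline{\mathsf{v}})>0$, and testing the supremum at $\underline{\mathsf{w}}=\underline{\mathsf{v}}$ yields $\mathsf{g}^T\underline{\mathsf{B}}^\dagger\mathsf{g}\geq \mathsf{a}(\mathsf{u},\mathsf{u})^2/\mathsf{b}(\underline{\mathsf{v}},\underline{\mathsf{v}})\geq C_T\,\mathsf{a}(\mathsf{u},\mathsf{u})$.

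The main obstacle throughout is the semi-definite bookkeeping: showing that $\mathsf{g}$ lies in $\Image(\underline{\mathsf{B}})$, validating the Rayleigh--Ritz characterization of the pseudoinverse when $\underline{\mathsf{B}}$ has a nontrivial kernel, and ensuring that the stable decomposition $\underline{\mathsf{v}}$ does not accidentally land in $\ker(\underline{\mathsf{B}})$ when $\mathsf{a}(\mathsf{u},\mathsf{u})>0$. At each of these junctures it is precisely the hypothesis $\mathcal{R}(\ker(\underline{\mathsf{B}}))\subset\ker(\mathsf{A})$ that rescues the argument; once those technicalities are dispatched, the remaining manipulations reduce to the classical positive-definite proof.
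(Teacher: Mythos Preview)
Your argument is correct and complete. The route, however, differs from the paper's. You package everything through the variational identity
\[
\mathsf{a}(\mathcal{K}\mathsf{u},\mathsf{u})=\mathsf{g}^T\underline{\mathsf{B}}^\dagger\mathsf{g}=\sup_{\mathsf{b}(\underline{\mathsf{w}},\underline{\mathsf{w}})>0}\frac{\mathsf{a}(\mathcal{R}\underline{\mathsf{w}},\mathsf{u})^2}{\mathsf{b}(\underline{\mathsf{w}},\underline{\mathsf{w}})},
\]
whose validity hinges on your observation that $\mathsf{g}=\mathcal{R}^*\mathsf{A}\mathsf{u}\in\Image(\underline{\mathsf{B}})$; both bounds then drop out by bounding or testing the supremum. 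The paper instead argues each inequality by a separate chain of Cauchy--Schwarz estimates with the square roots $\sqrt{\mathsf{A}}$ and $\sqrt{\underline{\mathsf{B}}}$: for the lower bound it uses the identity $\mathcal{R}\underline{\mathsf{v}}\cdot\mathsf{A}\mathsf{u}=\mathcal{R}\underline{\mathsf{B}}^\dagger\underline{\mathsf{B}}\,\underline{\mathsf{v}}\cdot\mathsf{A}\mathsf{u}$ (which is where the kernel hypothesis enters) and then Cauchy--Schwarz in the $\sqrt{\underline{\mathsf{B}}}$-seminorm; for the upper bound it applies Cauchy--Schwarz in the $\sqrt{\mathsf{A}}$-seminorm followed by \cref{eq:fsl2}. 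Your approach is more unified and makes the role of the kernel hypothesis very transparent (it is used three times, each time in the same way), at the cost of first establishing the Rayleigh--Ritz characterization of $\underline{\mathsf{B}}^\dagger$; the paper's approach avoids that lemma but is somewhat more ad hoc, treating the two inequalities asymmetrically.
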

The proof of \cref{lem:fsl} is an extension of the proof in \cite{Griebel1995}, while several modifications required due to that $\mathsf{a}(\cdot, \cdot)$ and $\mathsf{b}(\cdot,\cdot)$ are no longer positive definite.
\begin{proof}
Note that $\underline{\mathsf{v}}-\underline{\mathsf{B}}^\dagger \underline{\mathsf{B}}\,\underline{\mathsf{v}}\in \ker(\underline{\mathsf{B}})$ for all $\underline{\mathsf{v}}\in \underline{V}$. Thanks to the assumption that $\mathcal{R}$ maps $\ker(\underline{\mathsf{B}})$ into $\ker(\mathsf{A})$, we can show that
\[
\mathcal{R}\underline{\mathsf{v}}\cdot\mathsf{A}\mathsf{w}=\mathcal{R}\underline{\mathsf{B}}^\dagger \underline{\mathsf{B}}\,\underline{\mathsf{v}}\cdot\mathsf{A}\mathsf{w}
\]
for all $\mathsf{w}\in \Real^n$. Then, for all $\mathsf{u} \in \Real^n$ with $\mathsf{a}(\mathsf{u}, \mathsf{u}) > 0$, choosing $\underline{\mathsf{v}} \in \underline{V}$ such that $\mathsf{u}=\mathcal{R}\underline{\mathsf{v}}$, we have
\[
\begin{aligned}
\mathsf{a}(\mathsf{u}, \mathsf{u})= &\mathsf{u}\cdot\mathsf{A}\mathsf{u}=\mathcal{R}\underline{\mathsf{v}}\cdot\mathsf{A}\mathsf{u}=\mathcal{R}\underline{\mathsf{B}}^\dagger \underline{\mathsf{B}}\,\underline{\mathsf{v}}\cdot\mathsf{A}\mathsf{w}=\underline{\mathsf{B}}^\dagger \underline{\mathsf{B}}\,\underline{\mathsf{v}}\cdot \mathcal{R}^*\mathsf{A}\mathsf{u} = \sqrt{\underline{\mathsf{B}}} \, \underline{\mathsf{v}} \cdot \sqrt{\underline{\mathsf{B}}}\, \underline{\mathsf{B}}^\dagger \mathcal{R}^*\mathsf{A}\mathsf{u} \\
\leq & \RoundBrackets*{\sqrt{\underline{\mathsf{B}}} \, \underline{\mathsf{v}} \cdot \sqrt{\underline{\mathsf{B}}} \, \underline{\mathsf{v}}}^{1/2} \RoundBrackets*{\sqrt{\underline{\mathsf{B}}}\, \underline{\mathsf{B}}^\dagger \mathcal{R}^*\mathsf{A}\mathsf{u}\cdot\sqrt{\underline{\mathsf{B}}}\, \underline{\mathsf{B}}^\dagger \mathcal{R}^*\mathsf{A}\mathsf{u}}^{1/2} \\
= & \mathsf{b}(\underline{\mathsf{v}}, \underline{\mathsf{v}})^{1/2} \RoundBrackets*{\mathcal{R} \underline{\mathsf{B}}^\dagger \underline{\mathsf{B}} \,\underline{\mathsf{B}}^\dagger \mathcal{R}^*\mathsf{A} \mathsf{u} \cdot \mathsf{A}\mathsf{u}}^{1/2} \\
\leq & C_T^{-1/2} \mathsf{a}(\mathsf{u}, \mathsf{u})^{1/2} \RoundBrackets*{\mathcal{R} \underline{\mathsf{B}}^\dagger  \mathcal{R}^*\mathsf{A} \mathsf{u} \cdot \mathsf{A}\mathsf{u}}^{1/2} \\
= & C_T^{-1/2} \mathsf{a}(\mathsf{u}, \mathsf{u})^{1/2} \mathsf{a}(\mathcal{R} \underline{\mathsf{B}}^\dagger  \mathcal{R}^*\mathsf{A} \mathsf{u}, \mathsf{u})^{1/2},
\end{aligned}
\]
which gives that $C_T\, \mathsf{a}(\mathsf{u}, \mathsf{u}) \leq \mathsf{a}(\mathcal{R}\underline{\mathsf{B}}^\dagger\mathcal{R}^*\mathsf{A}\mathsf{u}, \mathsf{u})$ the first part of the target inequality. For the second part,
\[
\begin{aligned}
\mathsf{a}(\mathcal{R}\underline{\mathsf{B}}^\dagger\mathcal{R}^*\mathsf{A}\mathsf{u}, \mathsf{u}) = & \mathcal{R}\underline{\mathsf{B}}^\dagger\mathcal{R}^*\mathsf{A}\mathsf{u} \cdot \mathsf{A} \mathsf{u} = \sqrt{\mathsf{A}} \mathcal{R}\underline{\mathsf{B}}^\dagger\mathcal{R}^*\mathsf{A}\mathsf{u} \cdot \sqrt{\mathsf{A}} \mathsf{u} \\
\leq & \RoundBrackets*{\sqrt{\mathsf{A}} \mathcal{R}\underline{\mathsf{B}}^\dagger\mathcal{R}^*\mathsf{A}\mathsf{u} \cdot \sqrt{\mathsf{A}} \mathcal{R}\underline{\mathsf{B}}^\dagger\mathcal{R}^*\mathsf{A}\mathsf{u}}^{1/2} \RoundBrackets*{\sqrt{\mathsf{A}} \mathsf{u} \cdot \sqrt{\mathsf{A}} \mathsf{u}}^{1/2} \\
= & \mathsf{a}(\mathcal{R}\underline{\mathsf{B}}^\dagger\mathcal{R}^*\mathsf{A}\mathsf{u}, \mathcal{R}\underline{\mathsf{B}}^\dagger\mathcal{R}^*\mathsf{A}\mathsf{u})^{1/2} \mathsf{a}(\mathsf{u}, \mathsf{u})^{1/2} \\
\leq & C_R^{1/2} \mathsf{b}(\underline{\mathsf{B}}^\dagger\mathcal{R}^*\mathsf{A}\mathsf{u}, \underline{\mathsf{B}}^\dagger\mathcal{R}^*\mathsf{A}\mathsf{u})^{1/2} \mathsf{a}(\mathsf{u}, \mathsf{u})^{1/2} \\
= & C_R^{1/2} \RoundBrackets*{\underline{\mathsf{B}}^\dagger\mathcal{R}^*\mathsf{A}\mathsf{u}\cdot \underline{\mathsf{B}}\,\underline{\mathsf{B}}^\dagger\mathcal{R}^*\mathsf{A}\mathsf{u}}^{1/2} \mathsf{a}(\mathsf{u}, \mathsf{u})^{1/2} \\
= &  C_R^{1/2} \RoundBrackets*{\mathcal{R} \underline{\mathsf{B}}^\dagger \underline{\mathsf{B}}\, \underline{\mathsf{B}}^\dagger\mathcal{R}^*\mathsf{A}\mathsf{u}\cdot \mathsf{A}\mathsf{u}}^{1/2}\mathsf{a}(\mathsf{u}, \mathsf{u})^{1/2} \\
= & C_R^{1/2} \mathsf{a}(\mathcal{R} \underline{\mathsf{B}}^\dagger\mathcal{R}^*\mathsf{A}\mathsf{u}, \mathsf{u})^{1/2} \mathsf{a}(\mathsf{u}, \mathsf{u})^{1/2},
\end{aligned}
\]
and this gives $\mathsf{a}(\mathcal{R}\underline{\mathsf{B}}^\dagger\mathcal{R}^*\mathsf{A}\mathsf{u}, \mathsf{u}) \leq C_R\, \mathsf{a}(\mathsf{u}, \mathsf{u})$ due to $0< C_T\, \mathsf{a}(\mathsf{u}, \mathsf{u})\leq \mathsf{a}(\mathcal{R}\underline{\mathsf{B}}^\dagger\mathcal{R}^*\mathsf{A}\mathsf{u}, \mathsf{u})$.
\end{proof}

If take $\mathsf{b}(\cdot, \cdot)$ as
\begin{equation} \label{eq:b_bilinear}
\mathsf{b}(\underline{\mathsf{v}}, \underline{\mathsf{w}})=\underline{\mathsf{v}} \cdot \underline{\mathsf{B}} \,\underline{\mathsf{w}} \coloneqq \mathsf{v}_0\cdot \mathsf{A}_0\mathsf{w}_0 + \sum_{i=1}^{N} \mathsf{v}_i\cdot \mathsf{A}_i\mathsf{w}_i,
\end{equation}
where $\underline{\mathsf{v}}=[\mathsf{v}_0,\mathsf{v}_1,\dots,\mathsf{v}_N], \underline{\mathsf{w}}=[\mathsf{w}_0,\mathsf{w}_1,\dots,\mathsf{w}_N] \in \underline{V}$, and the map $\mathcal{R}$ as
\begin{equation} \label{eq:R}
\mathcal{R}\underline{\mathsf{v}} \coloneqq \mathsf{R}_0^\intercal \mathsf{v}_0+\sum_{i=1}^{N} \mathsf{R}_i^\intercal \mathsf{v}_i,
\end{equation}
we can then show that
\[
\underline{\mathsf{B}}^\dagger\,\underline{\mathsf{v}}=[\mathsf{A}_0^\dagger \mathsf{v}_0,\mathsf{A}_1^{-1}\mathsf{v}_1,\dots,\mathsf{A}_N^{-1}\mathsf{v}_N],\quad \forall \underline{\mathsf{v}}\in \underline{V},
\]
and
\[
\mathcal{R}^*\mathsf{v}=[\mathsf{R}_0\mathsf{v},\mathsf{R}_1\mathsf{v},\dots,\mathsf{R}_N\mathsf{v}],\quad \forall \mathsf{v} \in \Real^n.
\]
It is obvious to examine that $\mathcal{R}\underline{\mathsf{B}}^\dagger\mathcal{R}^\star\mathsf{A}=\mathsf{P}^{-1}\mathsf{A}$, and \cref{lem:fsl} tells
\[
\Cond(\mathsf{P}^{-1}\mathsf{A})\leq \frac{C_T}{C_R}
\]
given that \cref{eq:fsl1,eq:fsl2} hold, which boils our objective down to estimations of $C_T$ and $C_R$.

We have mentioned that $\mathsf{A}_i \neq \mathsf{R}_i \mathsf{A} \mathsf{R}_i^\intercal$. Actually, the relation could be further elucidated as the following lemma, where the notation $\mathsf{M} \lesssim \mathsf{N}$ for two matrices $\mathsf{M}$ and $\mathsf{N}$ means $\mathsf{N}-\mathsf{M}$ is positive semi-definite.

\begin{lemma}\label{lem:A_i}
The relation $\mathsf{R}_i \mathsf{A} \mathsf{R}_i^\intercal \lesssim \mathsf{A}_i$ holds.
\end{lemma}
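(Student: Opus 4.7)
The plan is to exploit the velocity-elimination factorizations $\mathsf{A} = \mathsf{B}\tilde{\mathsf{M}}^{-1}\mathsf{B}^\intercal$ and $\mathsf{A}_i = \mathsf{B}_i\tilde{\mathsf{M}}_i^{-1}\mathsf{B}_i^\intercal$ and compare the two quadratic forms edge by edge. Take any $\mathsf{v}_i \in \Real^{n_i^m}$, interpret it as the DoF vector of a pressure function $q_h \in W_h(K_i^m)$, and let $\tilde{q}_h$ denote its extension by zero to $W_h$, so that $\mathsf{R}_i^\intercal \mathsf{v}_i$ is the DoF vector of $\tilde{q}_h$. Since $\tilde{\mathsf{M}}$ and $\tilde{\mathsf{M}}_i$ are diagonal, both quadratic forms can be rewritten as sums of squares over edges weighted by the corresponding inverse diagonal entries, and the question reduces to matching these two sums.

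The first step is to verify that $(\mathsf{B}_i^\intercal \mathsf{v}_i)_e = (\mathsf{B}^\intercal \mathsf{R}_i^\intercal \mathsf{v}_i)_e$ on every edge $e$ of $K_i^m$, and that the latter vanishes on every edge disjoint from $\overline{K_i^m}$. Each entry of $\mathsf{B}^\intercal$ is $\pm|e|$ times the pressure jump across $e$: on an interior edge of $K_i^m$ both expressions see the same jump $\DSquareBrackets{q_h}_e$, while on a face $e\in\partial K_i^m\cap\Omega$ the zero extension produces the one-sided value $q_h|_{\tau_e}$, which is precisely what the local formulation reads at the Dirichlet face (where the velocity DoF is still present, because Dirichlet pressure data is a natural condition in the mixed form).

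The crucial step is then to compare the diagonal mass entries produced by the element-wise trapezoidal quadrature. For $e$ strictly interior to $K_i^m$ the same two adjacent fine elements contribute to both $(\tilde{\mathsf{M}})_{ee}$ and $(\tilde{\mathsf{M}}_i)_{ee}$, so they coincide. For $e\in\partial K_i^m\cap\Omega$ only the element on the $K_i^m$-side contributes to $(\tilde{\mathsf{M}}_i)_{ee}$, while $(\tilde{\mathsf{M}})_{ee}$ additionally picks up the nonnegative contribution of the exterior neighbor, giving $(\tilde{\mathsf{M}}_i)_{ee}\le(\tilde{\mathsf{M}})_{ee}$ and hence $(\tilde{\mathsf{M}}_i)_{ee}^{-1}\ge(\tilde{\mathsf{M}})_{ee}^{-1}$. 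Combining the two steps,
\[
\mathsf{v}_i^\intercal\mathsf{A}_i\mathsf{v}_i - \mathsf{v}_i^\intercal\mathsf{R}_i\mathsf{A}\mathsf{R}_i^\intercal\mathsf{v}_i = \sum_{e\in\partial K_i^m\cap\Omega}\bigl[(\tilde{\mathsf{M}}_i)_{ee}^{-1} - (\tilde{\mathsf{M}})_{ee}^{-1}\bigr]\bigl((\mathsf{B}_i^\intercal\mathsf{v}_i)_e\bigr)^2 \ge 0,
\]
which is exactly the claim.

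The delicate part is the bookkeeping at $\partial K_i^m\cap\Omega$: one has to track which velocity DoFs survive elimination (those lying on $\partial\Omega$ are removed by the essential no-flux condition, whereas those on the interior Dirichlet faces remain and carry only the one-sided diagonal mass contribution), and then check that the signed-jump formulas for $\mathsf{B}^\intercal\tilde{q}_h$ and $\mathsf{B}_i^\intercal q_h$ genuinely coincide on these interface faces. Once this is settled the edge-wise splitting makes the sign of the difference transparent, and no mesh regularity or smoothness assumption is needed.
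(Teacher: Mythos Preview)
Your argument is correct and is essentially the same as the paper's, only phrased through the factorizations $\mathsf{A}=\mathsf{B}\tilde{\mathsf{M}}^{-1}\mathsf{B}^\intercal$ and $\mathsf{A}_i=\mathsf{B}_i\tilde{\mathsf{M}}_i^{-1}\mathsf{B}_i^\intercal$ rather than by inspecting matrix entries. The paper computes $[\mathsf{A}_i]_{s,s}$ and $[\mathsf{R}_i\mathsf{A}\mathsf{R}_i^\intercal]_{s,s}$ explicitly for a corner fine element on a structured rectangular mesh and observes that the two matrices differ only on the diagonal, with the boundary contributions $\tfrac{2}{1/\mathtt{k}_{p,q}}$ in $\mathsf{A}_i$ dominating the harmonic-average contributions $\tfrac{2}{1/\mathtt{k}_{p,q}+1/\mathtt{k}_{p\pm1,q}}$ in $\mathsf{R}_i\mathsf{A}\mathsf{R}_i^\intercal$; this is exactly your statement that $(\tilde{\mathsf{M}}_i)_{ee}\le(\tilde{\mathsf{M}})_{ee}$ on faces of $\partial K_i^m\cap\Omega$, written out in coordinates. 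Your edge-wise formulation has the minor advantage that it is mesh-agnostic (nothing in the argument uses the Cartesian structure), while the paper's entry computation makes the size of the gap concrete. Incidentally, the paper's intermediate inequality and the sign of the diagonal difference are misprinted (they should read $[\mathsf{A}_i]_{s,s}\ge[\mathsf{R}_i\mathsf{A}\mathsf{R}_i^\intercal]_{s,s}$ and $\mathsf{A}_i-\mathsf{R}_i\mathsf{A}\mathsf{R}_i^\intercal$ diagonal nonnegative); your version has the signs right.
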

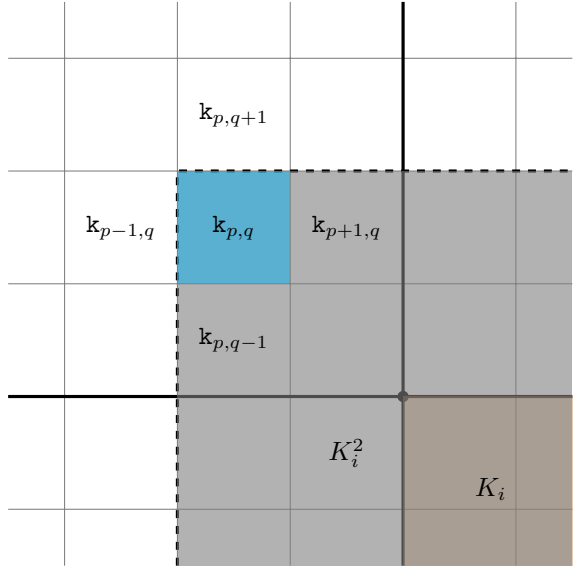
\begin{figure}[tbhp]
\centering
\begin{tikzpicture}[scale=1.5]
\draw[step=1.0, gray, thin] (-0.5, -0.5) grid (4.5, 4.5);
\draw[black, very thick] (-0.5, 1.0) -- (4.5, 1.0);
\draw[black, very thick] (3.0, -0.5) -- (3.0, 4.5);
\fill (3.0, 1.0) circle (0.05);
\draw[dashed, very thick] (1.0, -0.5) -- (1.0, 3.0) -- (4.5, 3.0);
\fill[brown, opacity=0.4] (3.0, -0.5) rectangle (4.5, 1.0);
\fill[gray, opacity=0.6] (1.0, -0.5) rectangle (4.5, 3.0);
\fill[cyan, opacity=0.5] (1.0, 2.0) rectangle (2.0, 3.0);
\node at (1.5, 2.5) {$\mathtt{k}_{p,q}$};
\node at (0.5, 2.5) {$\mathtt{k}_{p-1,q}$};
\node at (2.5, 2.5) {$\mathtt{k}_{p+1,q}$};
\node at (1.5, 1.5) {$\mathtt{k}_{p,q-1}$};
\node at (1.5, 3.5) {$\mathtt{k}_{p,q+1}$};
\node at (2.5, 0.5) {$K_i^2$};
\node[above left] at (4.0, 0.0) {$K_i$};

\end{tikzpicture}
\caption{An illustration for the proof of \cref{lem:A_i}, where $\mathtt{k}_{p,q}$, $\mathtt{k}_{p-1,q}$, $\mathtt{k}_{p+1,q}$, $\mathtt{k}_{p,q-1}$ and $\mathtt{k}_{p,q+1}$ are values of $\kappa$ on the fine elements respectively, the oversampling coarse element $K_i^m$ with $m=2$ is filled with gray and also decorated with dashed borderlines, and the fine element in the top-left corner of $K_i^m$ is highlighted.}
\label{fig:lem proof}
\end{figure}
\begin{proof}
For a fine element $\tau \in \mathcal{T}_h$ with $\tau \subset K_i^m$, which is corresponding to the $s$-th entry of a vector in $\Real^{n_i^m}$, the $s$-th row of $\mathsf{R}_i \mathsf{A} \mathsf{R}_i^\intercal$ is exactly same with the $s$-th row of $\mathsf{A}_i$ if $\tau$ does not contact with $\partial K_i^m$. This is because $\mathsf{A}_i$ is from discretizing \cref{eq:local} which shares the same law and discretization of the original system \cref{eq:orgional_equation} except for the boundary condition. Therefore, we only need to check entries of $\mathsf{A}_i$ and $\mathsf{R}_i\mathsf{A}\mathsf{R}_i^\intercal$ that are related to fine elements which contact the boundary.

The derivation will be easier to present if we consider a $2$D uniform structured mesh as in \cref{subsec:vet}. Moreover, we take the fine element in the top-left corner of $K_i^m$ to study the corresponding rows of $\mathsf{A}_i$ and $\mathsf{R}_i \mathsf{A} \mathsf{R}_i^\intercal$. We illustrate the geometric information in \cref{fig:lem proof}, where $\mathtt{k}_{p,q}$, $\mathtt{k}_{p-1,q}$, $\mathtt{k}_{p+1,q}$, $\mathtt{k}_{p,q-1}$ and $\mathtt{k}_{p,q+1}$ are values of $\kappa$ on the fine elements respectively. Let $s$ be the index of the entry related to the top-left fine element, and we can see that the $s$-th row of $\mathsf{A}_i$ differs from the $s$-th row of $\mathsf{R}_i \mathsf{A} \mathsf{R}_i^\intercal$ only at $[\mathsf{A}_i]_{s,s}$ and $[\mathsf{R}_i \mathsf{A} \mathsf{R}_i^\intercal]_{s,s}$, which can be calculated as
\[
\begin{aligned}
[\mathsf{A}_i]_{s,s} =& \RoundBrackets*{\frac{2}{1/\mathtt{k}_{p,q}}+\frac{2}{1/\mathtt{k}_{p,q}+1/\mathtt{k}_{p+1,q}}}\frac{h_y}{h_x} \\ &+\RoundBrackets*{\frac{2}{1/\mathtt{k}_{p,q}+1/\mathtt{k}_{p,q-1}}+\frac{2}{1/\mathtt{k}_{p,q}}}\frac{h_x}{h_y}, \\
[\mathsf{R}_i \mathsf{A} \mathsf{R}_i^\intercal]_{s,s} =& \RoundBrackets*{\frac{2}{1/\mathtt{k}_{p,q}+1/\mathtt{k}_{p-1,q}}+\frac{2}{1/\mathtt{k}_{p,q}+1/\mathtt{k}_{p+1,q}}}\frac{h_y}{h_x} \\
&+\RoundBrackets*{\frac{2}{1/\mathtt{k}_{p,q}+1/\mathtt{k}_{p,q-1}}+\frac{2}{1/\mathtt{k}_{p,q}+1/\mathtt{k}_{p,q+1}}}\frac{h_x}{h_y}.
\end{aligned}
\]
We can hence see that $[\mathsf{A}_i]_{s,s} \leq [\mathsf{R}_i \mathsf{A} \mathsf{R}_i^\intercal]_{s,s}$. From this case, we can see that $\mathsf{R}_i \mathsf{A} \mathsf{R}_i^\intercal - \mathsf{A}_i$ is exactly a diagonal matrix that all entries are non-negative, which essentially says $\mathsf{R}_i \mathsf{A} \mathsf{R}_i^\intercal \lesssim \mathsf{A}_i$.
\end{proof}

\begin{remark}
The proof of \cref{lem:A_i} also explains why we use $\mathsf{A}_i$ rather that $\mathsf{R}_i \mathsf{A} \mathsf{R}_i^\intercal$ for local solvers. Because constructing $\mathsf{A}_i$ needs the values of $\kappa$ in $K_i^m$, while constructing $\mathsf{R}_i \mathsf{A} \mathsf{R}_i^\intercal$ needs the values of $\kappa$ in $K_i^{m+1}$. In a message-passing model of parallel computing, each process handles a connected subdomain that consists of several coarse elements from $\CurlyBrackets{K_i}_{i=1}^N$ and $m$ layers of ghost points for inter-communications. For $\mathsf{R}_i \mathsf{A} \mathsf{R}_i^\intercal$, we need to set up $m+1$ layers,  which means more communications and will largely degrade the overall performance.
\end{remark}

We need the first assumption about the oversampling partition $\CurlyBrackets{K_i^m}_{i=1}^N$.

\paragraph{Assumption A} Suppose that $m\geq 1$ and there exists a positive constant $C_\text{os}$ such that for any $i$
\[
\Card\CurlyBrackets*{K_j^m:K_j^m \cap K_i^{m+1} \neq \varnothing, 1\leq j \leq N} \leq C_\text{os}.
\]
Note that $\mathsf{R}_j \mathsf{A} \mathsf{R}_i^\intercal$ is a zero matrix if $K_j^m \cap K_i^{m+1} = \varnothing$ and $1\leq i,j\leq N$, while $K_j^m \cap K_i^{m} = \varnothing$ cannot result in $\mathsf{R}_j \mathsf{A} \mathsf{R}_i^\intercal=\mathsf{0}$. The next lemma provides an estimation of $C_R$ in \cref{eq:fsl2}.

\begin{lemma}\label{lem:for C_R}
Under assumption A, for all $\underline{\mathsf{v}} \in \underline{V}$,
\[
\mathsf{a}(\mathcal{R}\underline{\mathsf{v}}, \mathcal{R}\underline{\mathsf{v}}) \leq \RoundBrackets*{1+4C_\text{os}} \mathsf{b}(\underline{\mathsf{v}}, \underline{\mathsf{v}}),
\]
where $\mathcal{R}$ and $\mathsf{b}(\cdot,\cdot)$ are defined in \cref{eq:R,eq:b_bilinear} respectively.
\end{lemma}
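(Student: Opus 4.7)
The plan is to split $\mathcal{R}\underline{\mathsf{v}} = \mathsf{w}_0 + \mathsf{W}$ with $\mathsf{w}_0 \coloneqq \mathsf{R}_0^\intercal \mathsf{v}_0$ and $\mathsf{W} \coloneqq \sum_{i=1}^N \mathsf{R}_i^\intercal \mathsf{v}_i$, and then apply the weighted Young inequality $(a+b)^2 \leq (1+\alpha)\,a^2 + (1+1/\alpha)\,b^2$ to the $\mathsf{a}$-semi-norm, leaving $\alpha>0$ to be optimized. The coarse contribution is immediate from the definition of $\mathsf{A}_0 = \mathsf{R}_0\mathsf{A}\mathsf{R}_0^\intercal$: one has $\mathsf{a}(\mathsf{w}_0, \mathsf{w}_0) = \mathsf{v}_0 \cdot \mathsf{A}_0 \mathsf{v}_0$, which is exactly the coarse part of $\mathsf{b}(\underline{\mathsf{v}},\underline{\mathsf{v}})$.

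The main work is to estimate $\mathsf{a}(\mathsf{W}, \mathsf{W}) = \sum_{i,j=1}^N \mathsf{v}_i \cdot \mathsf{R}_i \mathsf{A} \mathsf{R}_j^\intercal \mathsf{v}_j$. Since nonzero entries of $\mathsf{A}$ only couple face-adjacent fine elements, $\mathsf{R}_i \mathsf{A} \mathsf{R}_j^\intercal$ vanishes whenever $K_j^m \cap K_i^{m+1} = \varnothing$, as already pointed out in the remark following Assumption A. Applying Cauchy--Schwarz to the positive semi-definite form $\mathsf{a}$ and then $\sqrt{xy}\leq(x+y)/2$ gives
\[
|\mathsf{v}_i \cdot \mathsf{R}_i \mathsf{A} \mathsf{R}_j^\intercal \mathsf{v}_j| \leq \tfrac{1}{2}\bigl(\mathsf{a}(\mathsf{R}_i^\intercal\mathsf{v}_i, \mathsf{R}_i^\intercal\mathsf{v}_i) + \mathsf{a}(\mathsf{R}_j^\intercal\mathsf{v}_j, \mathsf{R}_j^\intercal\mathsf{v}_j)\bigr).
\]
Summing over the non-vanishing pairs and using Assumption A to bound the number of $j$ adjacent to each fixed $i$ by $C_\text{os}$ yields $\mathsf{a}(\mathsf{W},\mathsf{W}) \leq C_\text{os} \sum_{i=1}^N \mathsf{v}_i \cdot \mathsf{R}_i \mathsf{A} \mathsf{R}_i^\intercal \mathsf{v}_i$, and \cref{lem:A_i} then upgrades each $\mathsf{R}_i \mathsf{A} \mathsf{R}_i^\intercal$ to $\mathsf{A}_i$.

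Combining the two pieces delivers
\[
\mathsf{a}(\mathcal{R}\underline{\mathsf{v}}, \mathcal{R}\underline{\mathsf{v}}) \leq (1+\alpha)\, \mathsf{v}_0 \cdot \mathsf{A}_0 \mathsf{v}_0 + \bigl(1+\tfrac{1}{\alpha}\bigr) C_\text{os} \sum_{i=1}^N \mathsf{v}_i \cdot \mathsf{A}_i \mathsf{v}_i.
\]
Choosing $\alpha = 4 C_\text{os}$ makes the coarse coefficient equal to $1+4C_\text{os}$, while the local coefficient becomes $C_\text{os} + 1/4$, which is dominated by $1+4C_\text{os}$ for any $C_\text{os}>0$ (and $C_\text{os}\geq 1$ anyway, since $i$ itself is always counted in its own neighborhood). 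Taking the maximum of the two coefficients yields the stated bound.

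The main obstacle I anticipate is a clean accounting of the sparsity pattern of $\mathsf{A}$ so that Assumption A applies to exactly the index pairs that survive the double sum, together with the reduction from $\mathsf{R}_i \mathsf{A} \mathsf{R}_i^\intercal$ to $\mathsf{A}_i$ via \cref{lem:A_i}; once those two ingredients are in hand, the remainder is a textbook finite-overlap (coloring) argument.
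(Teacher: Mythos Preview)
Your argument is correct and follows essentially the same route as the paper: split into coarse and local parts, handle the coarse part via $\mathsf{A}_0=\mathsf{R}_0\mathsf{A}\mathsf{R}_0^\intercal$, and control the local cross terms by Cauchy--Schwarz together with Assumption~A and \cref{lem:A_i}. The only cosmetic difference is that the paper uses the unweighted inequality $\mathsf{a}(\mathsf{v}+\mathsf{w},\mathsf{v}+\mathsf{w})\leq 2(\mathsf{a}(\mathsf{v},\mathsf{v})+\mathsf{a}(\mathsf{w},\mathsf{w}))$ and a looser factor in the cross-term estimate, whereas your weighted Young inequality and sharper $\tfrac12$ factor lead cleanly to the stated constant $1+4C_\text{os}$.
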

\begin{proof}
The inequality $\mathsf{a}(\mathsf{v}+\mathsf{w},\mathsf{v}+\mathsf{w}) \leq 2\RoundBrackets*{\mathsf{a}(\mathsf{v},\mathsf{v})+\mathsf{a}(\mathsf{w},\mathsf{w})}$ always holds if $\mathsf{A}$ is positive semi-definite. Then we have,
\[
\mathsf{a}(\mathcal{R}\underline{\mathsf{v}}, \mathcal{R}\underline{\mathsf{v}}) \leq 2\CurlyBrackets*{\mathsf{a}(\mathsf{R}^\intercal_0 \mathsf{v}_0, \mathsf{R}^\intercal_0 \mathsf{v}_0) + \mathsf{a}(\sum_{i=1}^{N} \mathsf{R}^\intercal_i \mathsf{v}_i, \sum_{i=1}^{N} \mathsf{R}^\intercal_i \mathsf{v}_i)}.
\]
Let $J_i\coloneqq \CurlyBrackets{j:K_j^m \cap K_i^{m+1} \neq \varnothing, 1\leq j \neq N}$. We can see that $\Card(J_i) \leq C_\text{os}$ according to Assumption A and
\[
\begin{aligned}
\mathsf{a}(\sum_{i=1}^{N} \mathsf{R}^\intercal_i \mathsf{v}_i, \sum_{i=1}^{N} \mathsf{R}^\intercal_i \mathsf{v}_i) = & \sum_{i,j=1}^{N} \mathsf{v}_i \cdot \mathsf{R}_i\mathsf{A}\mathsf{R}_j^\intercal \mathsf{v}_j = \sum_{i=1}^N \sum_{j \in J_i} \mathsf{v}_i \cdot \mathsf{R}_i\mathsf{A}\mathsf{R}_j^\intercal \mathsf{v}_j \\
\leq &  2\sum_{i=1}^N \sum_{j \in J_i} \mathsf{v}_i\cdot \mathsf{R}_i \mathsf{A}\mathsf{R}_i^\intercal \mathsf{v}_i+ \mathsf{v}_j\cdot \mathsf{R}_j \mathsf{A}\mathsf{R}_j^\intercal \mathsf{v}_j \\
= & 2C_\text{os} \sum_{i=1}^{N} \mathsf{v}_i\cdot \mathsf{R}_i \mathsf{A}\mathsf{R}_i^\intercal \mathsf{v}_i + 2\sum_{i=1}^N \sum_{j \in J_i} \mathsf{v}_j\cdot \mathsf{R}_j \mathsf{A}\mathsf{R}_j^\intercal \mathsf{v}_j \\
\leq & 2C_\text{os} \sum_{i=1}^{N} \mathsf{v}_i\cdot \mathsf{A}_i \mathsf{v}_i + 2\sum_{i=1}^N \sum_{j \in J_i} \mathsf{v}_j\cdot \mathsf{A}_j \mathsf{v}_j,
\end{aligned}
\]
where the last line is from \cref{lem:A_i}. Note that in the summation $\sum_{i=1}^N \sum_{j \in J_i}$, each index $i\in \CurlyBrackets{1,\dots,N}$ will be counted at most $C_\text{os}$ times. Therefore, we obtain
\[
\mathsf{a}(\sum_{i=1}^{N} \mathsf{R}^\intercal_i \mathsf{v}_i, \sum_{i=1}^{N} \mathsf{R}^\intercal_i \mathsf{v}_i) \leq 4C_\text{os} \sum_{i=1}^{N} \mathsf{v}_i\cdot \mathsf{A}_i \mathsf{v}_i.
\]
Recalling that $\mathsf{a}(\mathsf{R}^\intercal_0 \mathsf{v}_0, \mathsf{R}^\intercal_0 \mathsf{v}_0)=\mathsf{R}^\intercal_0 \mathsf{v}_0\cdot \mathsf{A}\mathsf{R}^\intercal_0 \mathsf{v}_0=\mathsf{v}_0\cdot \mathsf{A}_0 \mathsf{v}_0$, we hence finish the proof.
\end{proof}

We can represent $s_i(\cdot,\cdot)$ defined in \cref{eq:s_i_func} in a linear algebraic form as
\[
\mathsf{s}_i(\mathsf{v},\mathsf{w}) = \mathsf{v} \cdot \mathsf{S}_i\mathsf{w}
\]
for all $\mathsf{v}$ and $\mathsf{w}$ in $\Real^{n_i}$, where $\mathsf{S}_i$ is a diagonal matrix. For a fine element $\tau \subset K_i$ which is corresponding to the $s$-th entry of $\Real^{n_i}$, we can obtain that $[\mathsf{S}_i]_{s,s}=\tilde{\kappa}|_\tau \abs{\tau}$, where $\abs{\tau}$ is the area/volume of $\tau$. Similarly, we can also rewrite $a_i(\cdot,\cdot)$ in \cref{eq:a_i_func} as
\[
\mathsf{a}_i(\mathsf{v}, \mathsf{w}) = \mathsf{v} \cdot \hat{\mathsf{A}}_i \mathsf{w}
\]
for all $\mathsf{v}$ and $\mathsf{w}$ in $\Real^n_i$, where $\hat{\mathsf{A}}_i\in \Real^{n_i\times n_i}$. Let $\mathsf{C}_i\in \Real^{n_i\times n}$ be the restriction matrix that restricts the DoF in the whole domain to $K_i$. Thanks to the non-overlapping property of the partition $\CurlyBrackets{K_i}_{i=1}^N$, we can show that $\sum_{i=1}^{N}\mathsf{C}_i^\intercal \mathsf{C}_i=\mathsf{I}_n$, where $\mathsf{I}_n$ is the identity matrix of size $n$. We can have the following lemma, which is clear from the definitions \cref{eq:a_i_func,eq:varia vet} for a $2$D uniform structured mesh.
\begin{lemma}\label{lem:partition}
The relation $ \sum_{i=1}^N\mathsf{C}_i^\intercal \hat{\mathsf{A}}_i\mathsf{C}_i \lesssim \mathsf{A}$ holds.
\end{lemma}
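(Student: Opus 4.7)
The plan is to compare the two quadratic forms directly by unfolding them on the underlying edge/face sums and observing that $\sum_i a_i(\cdot,\cdot)$ is simply $\mathsf{a}(\cdot,\cdot)$ with the coarse-interface contributions removed. Concretely, for any $\mathsf{q}\in\Real^{n}$ corresponding to $q_h\in W_h$, the variational identity \cref{eq:varia vet} gives
\[
\mathsf{q}\cdot \mathsf{A}\mathsf{q}=\sum_{e\in \mathcal{E}_h^0}\kappa_e\,\DSquareBrackets{q_h}_e^{2}\frac{\abs{e}^{2}}{h_xh_y},
\]
while by the definition of $\hat{\mathsf{A}}_i$ in \cref{eq:a_i_func},
\[
\sum_{i=1}^{N}(\mathsf{C}_i\mathsf{q})\cdot \hat{\mathsf{A}}_i(\mathsf{C}_i\mathsf{q})=\sum_{i=1}^{N}\sum_{e\in \mathcal{E}_h^0(K_i)}\kappa_e\,\DSquareBrackets{q_h}_e^{2}\frac{\abs{e}^{2}}{h_xh_y}.
\]

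The key step is the disjoint decomposition of the set of internal edges/faces
\[
\mathcal{E}_h^0 \;=\;\Bigl(\bigsqcup_{i=1}^{N}\mathcal{E}_h^0(K_i)\Bigr)\sqcup \mathcal{E}_h^{\partial},
\]
where $\mathcal{E}_h^{\partial}$ collects the fine edges/faces that lie on the interface $\partial K_i\cap \partial K_j$ between two distinct coarse elements; this decomposition is well-defined because $\CurlyBrackets{K_i}_{i=1}^{N}$ is a non-overlapping partition of $\Omega$. Subtracting the two displays above term by term yields
\[
\mathsf{q}\cdot\Bigl(\mathsf{A}-\sum_{i=1}^{N}\mathsf{C}_i^\intercal \hat{\mathsf{A}}_i\mathsf{C}_i\Bigr)\mathsf{q}\;=\;\sum_{e\in \mathcal{E}_h^{\partial}}\kappa_e\,\DSquareBrackets{q_h}_e^{2}\frac{\abs{e}^{2}}{h_xh_y}\;\geq\;0,
\]
since $\kappa_e>0$ and every summand is a non-negative square. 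As $\mathsf{q}$ was arbitrary, the matrix $\mathsf{A}-\sum_{i=1}^{N}\mathsf{C}_i^\intercal \hat{\mathsf{A}}_i\mathsf{C}_i$ is positive semi-definite, which is exactly the statement $\sum_{i=1}^N\mathsf{C}_i^\intercal \hat{\mathsf{A}}_i\mathsf{C}_i\lesssim \mathsf{A}$.

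There is essentially no hard step: the entire argument is energy bookkeeping. The only point that requires a small amount of care is making sure the edge-set decomposition is truly disjoint, which is immediate from non-overlap of the coarse partition, and noting that the map $\mathsf{C}_i$ restricts the cell-wise pressure degrees of freedom without modification so that jumps on edges interior to $K_i$ are unchanged by restriction. In $3$D, the same argument goes through with ``edges'' replaced by ``faces'' as already flagged after \cref{eq:varia vet}, and on more general shape-regular meshes one only needs the fact that $\bigcup_i \overline{K_i}=\overline{\Omega}$ and $K_i\cap K_j=\varnothing$ for $i\neq j$, so that each internal fine face belongs either to a unique $\mathcal{E}_h^0(K_i)$ or to a coarse interface.
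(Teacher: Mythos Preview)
Your argument is correct and is exactly the unpacking of what the paper means by ``clear from the definitions \cref{eq:a_i_func,eq:varia vet}'': you express both quadratic forms as edge sums and observe that their difference is the non-negative sum over coarse-interface edges. There is nothing to add.
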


According to the approximation property of eigenspaces, for any $v_i \in W_h(K_i)$, we can find a $w_i \in W_H^\text{c}(K_i)$ and $r_i \in W_h(K_i)$ such that $v_i=w_i+r_i$ with
\[
s_i(v_i-w_i, v_i-w_i)=s_i(r_i,r_i)\leq \frac{1}{\lambda_{L_i+1}}a_i(v_i, v_i),
\]
where $\lambda_{L_i+1}$ is the $(L_i+1)$-th smallest eigenvalue of the eigenproblem \cref{eq:spepb}. As illustrated in \cref{sec:gms}, by setting a proper $\tilde{\kappa}$ and choosing $L_i$ large enough, the eigenvalue $\lambda_{L_i+1}$ can be bounded below stably w.r.t.~contrast ratios. Let $\mathsf{E}_i \in \Real^{n_i^m\times n_i}$ be an extension matrix that extends the DoF in $K_i$ to $K_i^m$. Our analysis requires the following assumption.

\paragraph{Assumption B} Suppose that $\tilde{\kappa}$ is constructed such that $\mathsf{E}_i^\intercal \mathsf{A}_i \mathsf{E}_i \lesssim \mathsf{S}_i$, and $L_i$ is chosen such that $1/\lambda_{L_i+1} \leq \epsilon$ for any $i$, where $\epsilon$ is a predefined positive number.

To find a $\tilde{\kappa}$ that fulfills Assumption B, a direct way is to lump all absolute values of the off-diagonal entries into the diagonal entry in each row. Again, we explain the idea with \cref{fig:assump_B} for a $2$D uniform structured mesh. Let a test function $v$ belong to $W_h(K_i)$ that corresponds to a $\mathsf{v}\in \Real^{n_i}$. Then we can calculate that
\[
\begin{aligned}
\mathsf{v}\cdot \mathsf{E}_i^\intercal \mathsf{A}_i \mathsf{E}_i \mathsf{v} =& \kappa_{e_1}\RoundBrackets*{\mathtt{v}_{p,q}-\mathtt{v}_{p,q-1}}^2 \frac{h_y}{h_x} + \kappa_{e_2}\RoundBrackets*{\mathtt{v}_{p,q}-\mathtt{v}_{p+1,q}}^2 \frac{h_x}{h_y} + \kappa_{e_3}\mathtt{v}_{p,q}^2 \frac{h_x}{h_y} + \kappa_{e_4}\mathtt{v}_{p,q}^2 \frac{h_y}{h_x}\\
& + \CurlyBrackets*{\text{other terms}},
\end{aligned}
\]
where
\[
\begin{aligned}
\kappa_{e_1}=\frac{2}{1/\mathsf{k}_{p,q}+1/\mathsf{k}_{p,q-1}}, \quad &\kappa_{e_2}=\frac{2}{1/\mathsf{k}_{p,q}+1/\mathsf{k}_{p+1,q}}, \quad\kappa_{e_3}=\frac{2}{1/\mathsf{k}_{p,q}+1/\mathsf{k}_{p,q+1}}, \\
\quad\text{and}\quad &\kappa_{e_4}=\frac{2}{1/\mathsf{k}_{p,q}+1/\mathsf{k}_{p-1,q}}.
\end{aligned}
\]
Via the inequalities $\RoundBrackets*{\mathtt{v}_{p,q}-\mathtt{v}_{p,q-1}}^2\leq 2(\mathtt{v}_{p,q}^2+\mathtt{v}_{p+1,q}^2)$ and $\RoundBrackets*{\mathtt{v}_{p,q}-\mathtt{v}_{p,q+1}}^2\leq 2(\mathtt{v}_{p,q}^2+\mathtt{v}_{p,q+1}^2)$, we can see that
\[
\begin{aligned}
\mathsf{v}\cdot \mathsf{E}_i^\intercal \mathsf{A}_i \mathsf{E}_i \mathsf{v} \leq & \RoundBrackets*{2\kappa_{e_1}\frac{h_y}{h_x}+2\kappa_{e_2}\frac{h_x}{h_y}+\kappa_{e_3}\frac{h_y}{h_x}+\kappa_{e_3}\frac{h_x}{h_y}} \mathtt{v}_{p,q}^2 \\
&+\CurlyBrackets*{\text{other terms}}.
\end{aligned}
\]
Recalling that $\mathsf{S}_i$ is a diagonal matrix with $[\mathsf{S}_i]_{s,s}=\tilde{\kappa}|_\tau \abs{\tau}$, we can set
\begin{equation} \label{eq:tilde_kappa}
\tilde{\kappa}|_\tau = \RoundBrackets*{2\kappa_{e_1}\frac{h_y}{h_x}+2\kappa_{e_2}\frac{h_x}{h_y}+\kappa_{e_3}\frac{h_y}{h_x}+\kappa_{e_3}\frac{h_x}{h_y}}\frac{1}{h_xh_y},
\end{equation}
where $\tau$ is the fine element in the top-left corner of $K_i$. The values of $\tilde{\kappa}$ on other fine elements could also be found in this way.

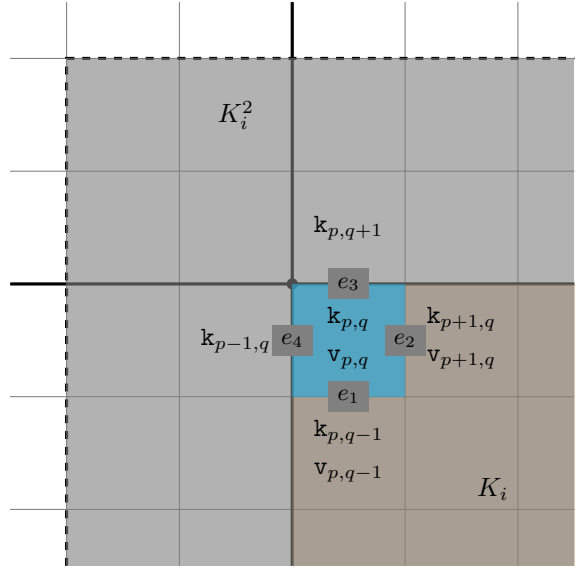
\begin{figure}[tbhp]
\centering
\begin{tikzpicture}[scale=1.5]
\draw[step=1.0, gray, thin] (0.5, -0.5) grid (5.5, 4.5);
\draw[black, very thick] (0.5, 2.0) -- (5.5, 2.0);
\draw[black, very thick] (3.0, -0.5) -- (3.0, 4.5);
\fill (3.0, 2.0) circle (0.05);
\draw[dashed, very thick] (1.0, -0.5) -- (1.0, 4.0) -- (5.5, 4.0);
\fill[brown, opacity=0.4] (3.0, -0.5) rectangle (5.5, 2.0);
\fill[gray, opacity=0.6] (1.0, -0.5) rectangle (5.5, 4.0);
\fill[cyan, opacity=0.5] (3.0, 1.0) rectangle (4.0, 2.0);
\node[above] at (3.5, 1.5) {$\mathtt{k}_{p,q}$};
\node[below] at (3.5, 1.5) {$\mathtt{v}_{p,q}$};
\node at (2.5, 1.5) {$\mathtt{k}_{p-1,q}$};
\node[above] at (4.5, 1.5) {$\mathtt{k}_{p+1,q}$};
\node[below] at (4.5, 1.5) {$\mathtt{v}_{p+1,q}$};
\node[above] at (3.5, 0.5) {$\mathtt{k}_{p,q-1}$};
\node[below] at (3.5, 0.5) {$\mathtt{v}_{p,q-1}$};
\node at (3.5, 2.5) {$\mathtt{k}_{p,q+1}$};
\node at (2.5, 3.5) {$K_i^2$};
\node[above left] at (5.0, 0.0) {$K_i$};
\node at (3.5, 1.0) [fill=gray] {\small $e_1$};
\node at (4.0, 1.5) [fill=gray] {\small $e_2$};
\node at (3.5, 2.0) [fill=gray] {\small $e_3$};
\node at (3.0, 1.5) [fill=gray] {\small $e_4$};
\end{tikzpicture}
\caption{An illustration for a construction of $\tilde{\kappa}$, where $\mathtt{k}_{p,q}$, $\mathtt{k}_{p-1,q}$, $\mathtt{k}_{p+1,q}$, $\mathtt{k}_{p,q-1}$ and $\mathtt{k}_{p,q+1}$ are values of $\kappa$ on the located fine elements respectively, the fine element in the top-left corner of $K_i$ is highlighted with four edges $e_1\sim e_4$ surrounded, $\mathtt{v}_{p,q}$, $\mathtt{v}_{p+1,q}$ and $\mathtt{v}_{p,q-1}$ are values of $v\in W_h(K_i)$ on the located fine elements respectively.}
\label{fig:assump_B}
\end{figure}

Certainly, if $\tilde{\kappa}$ satisfies Assumption B, then $\tilde{\kappa}$ is positive on each fine element, which implies $\mathsf{S}_i$ is invertible. We can use this information to depict the kernel of $\mathsf{A}_0$ as the following lemma.
\begin{lemma}\label{lem:ker A_0}
Under Assumption B, the dimension of $\ker(\mathsf{A}_0)$ is $1$, and $\mathcal{R}$ maps $\ker(\underline{\mathsf{B}})$ into $\ker(\mathsf{A})$, where $\mathcal{R}$ and $\underline{\mathsf{B}}$ are defined in \cref{eq:R,eq:b_bilinear}.
\end{lemma}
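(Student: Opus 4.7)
The plan is to split the claim into two independent pieces and handle them separately. First, I would address the inclusion $\mathcal{R}(\ker(\underline{\mathsf{B}})) \subset \ker(\mathsf{A})$. Because $\underline{\mathsf{B}}$ is block-diagonal with blocks $\mathsf{A}_0, \mathsf{A}_1, \ldots, \mathsf{A}_N$, and the local blocks $\mathsf{A}_i$ ($i \geq 1$) are invertible by the standing assumption made when they were introduced, the kernel of $\underline{\mathsf{B}}$ reduces to vectors of the form $\underline{\mathsf{v}} = [\mathsf{v}_0, 0, \ldots, 0]$ with $\mathsf{v}_0 \in \ker(\mathsf{A}_0)$. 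For such $\underline{\mathsf{v}}$, $\mathcal{R}\underline{\mathsf{v}} = \mathsf{R}_0^\intercal \mathsf{v}_0$, and the identity $\mathsf{A}_0 = \mathsf{R}_0 \mathsf{A} \mathsf{R}_0^\intercal$ gives $(\mathsf{R}_0^\intercal \mathsf{v}_0)\cdot\mathsf{A}(\mathsf{R}_0^\intercal\mathsf{v}_0) = \mathsf{v}_0\cdot\mathsf{A}_0\mathsf{v}_0 = 0$. Since $\mathsf{A}$ is positive semi-definite, this forces $\mathsf{A}\mathsf{R}_0^\intercal \mathsf{v}_0 = 0$, completing the mapping claim.

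Next, to show $\dim \ker(\mathsf{A}_0) = 1$, the strategy is to identify $\ker(\mathsf{A}_0)$ with $\{\mathsf{v}_0 : \mathsf{R}_0^\intercal \mathsf{v}_0 \in \ker(\mathsf{A})\}$ via the same positive semi-definiteness argument, and then to analyze each factor. The no-flux boundary condition makes $\ker(\mathsf{A})$ the one-dimensional span of the global constant vector. So I would reduce the statement to showing that $\mathsf{R}_0^\intercal$ is injective and that the global constant lies in its range.

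Injectivity of $\mathsf{R}_0^\intercal$ is immediate: its columns are the multiscale eigenvectors $\{\Phi_{h,i}^j\}$, which have pairwise disjoint supports across distinct coarse elements and are mutually $s_i$-orthonormal inside each $K_i$, so they are linearly independent in $W_h$. The nontrivial input is Assumption B, which guarantees $\tilde{\kappa}$ is pointwise positive and hence $\mathsf{S}_i$ is symmetric positive definite; this turns \cref{eq:spepb} into a well-posed generalized eigenproblem whose lowest eigenvalue is $0$. Because $a_i(\Phi,\Phi) = \sum_{e \in \mathcal{E}_h^0(K_i)} \kappa_e \llbracket \Phi \rrbracket_e^2 |e|^2/(h_x h_y)$ vanishes exactly on functions that are constant on $K_i$, the eigenspace for $\lambda = 0$ is precisely the local constants, so each $W_H^\text{c}(K_i)$ contains the indicator of $K_i$. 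Summing these indicators with equal weights across $i = 1,\ldots,N$ reproduces the global constant, which therefore lies in $W_H^\text{c}$.

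Combining the two observations: the preimage under the injection $\mathsf{R}_0^\intercal$ of the one-dimensional space $\ker(\mathsf{A}) \cap W_H^\text{c}$ equals $\ker(\mathsf{A}_0)$ and has dimension one. I expect the main subtlety to be the careful bookkeeping that ties Assumption B to the existence of the zero eigenvalue and to the inclusion of global constants in $W_H^\text{c}$; everything else is linear-algebra manipulation of the relation $\mathsf{A}_0 = \mathsf{R}_0 \mathsf{A} \mathsf{R}_0^\intercal$ and of the block-diagonal structure of $\underline{\mathsf{B}}$.
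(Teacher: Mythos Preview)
Your proposal is correct and follows essentially the same approach as the paper. Both arguments hinge on the identity $\mathsf{A}_0=\mathsf{R}_0\mathsf{A}\mathsf{R}_0^\intercal$ together with positive semi-definiteness to reduce $\ker(\mathsf{A}_0)$ to preimages of global constants, then use Assumption~B (via invertibility of $\mathsf{S}_i$) to pin down the zero-eigenspace on each $K_i$ as the local constants; your version is slightly more explicit in separating out the injectivity of $\mathsf{R}_0^\intercal$ and the fact that the global constant lies in $W_H^\text{c}$, whereas the paper phrases this as ``the restriction of $w_h$ on $K_i$ has only one DoF, which hence determines $\mathsf{w}$.''
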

\begin{proof}
Let $\mathsf{w}\in \Real^{N^\text{c}}$ with $\mathsf{A}_0\mathsf{w}=\mathsf{0}$. Then,
\[
\mathsf{w}\cdot \mathsf{A}_0\mathsf{w}=0 \Rightarrow \mathsf{w}\cdot \mathsf{R}_0\mathsf{A}\mathsf{R}^\intercal_0 \mathsf{w}=0 \Rightarrow \sqrt{\mathsf{A}}\mathsf{R}^\intercal_0 \mathsf{w}=\mathsf{0} \Rightarrow \mathsf{R}^\intercal_0 \mathsf{w} \in \ker(\mathsf{A}).
\]
This gives us that the function $w_h \in W_h$ represented by $\mathsf{R}^\intercal_0 \mathsf{w}$ is constant in $\Omega$. On each coarse element $K_i$, the smallest eigenvalue is $0$ with a corresponding eigenvector is a constant vector. Due to $\mathsf{S}_i$ is invertible, the eigenspace corresponding to $0$ has a dimension of one. Note that $\CurlyBrackets{K_i}_{i=1}^N$ is non-overlapping, then restriction of $w_h$ on $K_i$ has only one DoF, which hence determines $\mathsf{w}$. Although that $\mathsf{w}$ may not be a constant vector, we have $\dim(\ker(\mathsf{A}_0))=1$. It is clear that $\mathcal{R}$ \cref{eq:R} maps $\ker(\underline{\mathsf{B}})$ into $\ker(\mathsf{A})$ because $\mathsf{\mathsf{A}}_i$ is invertible and $\ker(\mathsf{A}_0)$ has been fully understood.
\end{proof}

The following lemma provides an estimation of $C_T$ in \cref{eq:fsl1}.
\begin{lemma}\label{lem:for C_T}
Under Assumption A and B, for all $\mathsf{u}\in \Image(\mathsf{A})$, there exists a $\underline{\mathsf{v}}\in \Image(\underline{\mathsf{B}})$ with $\mathsf{u}=\mathcal{R}\underline{\mathsf{v}}$ and
\[
\frac{1}{2+(8C_\text{os}+1)\epsilon}\,\mathsf{b}(\underline{\mathsf{v}}, \underline{\mathsf{v}}) \leq \mathsf{a}(\mathsf{u}, \mathsf{u}),
\]
where $\underline{\mathsf{B}}$, $\mathcal{R}$ and $\mathsf{b}(\cdot,\cdot)$ are defined in \cref{eq:R,eq:b_bilinear}.
\end{lemma}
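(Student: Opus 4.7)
The plan is to construct a stable splitting $\mathsf{u} = \mathcal{R}\underline{\mathsf{v}}$ from the local spectral decomposition on each coarse element, then bound the coarse and local energies separately.

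Identify $\mathsf{u}$ with a function $u_h \in W_h$; since $\mathsf{u} \in \Image(\mathsf{A})$ and $\ker(\mathsf{A})$ is spanned by constants, $u_h$ has zero mean on $\Omega$. On each $K_i$, expand $u_h|_{K_i}$ in the $s_i$-orthonormal eigenbasis of \cref{eq:spepb} and split it as $u_h|_{K_i} = w_i + r_i$, where $w_i \in W_H^\text{c}(K_i)$ is the projection onto the first $L_i$ eigenfunctions and $r_i$ is the tail. The standard Rayleigh-quotient estimate combined with Assumption B gives
\[
s_i(r_i, r_i) \leq \lambda_{L_i+1}^{-1}\, a_i(u_h|_{K_i}, u_h|_{K_i}) \leq \epsilon\, a_i(u_h|_{K_i}, u_h|_{K_i}).
\]
Set $u_0 := \sum_i w_i \in W_H^\text{c}$ with coefficient vector $\mathsf{v}_0$, and $\mathsf{v}_i := \mathsf{E}_i \mathsf{r}_i$, the zero-extension of $r_i$ from $K_i$ to $K_i^m$. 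By construction $\mathcal{R}\underline{\mathsf{v}} = u_0 + \sum_i r_i = u_h = \mathsf{u}$.

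The local estimate is then immediate from Assumption B:
\[
\mathsf{v}_i^\intercal \mathsf{A}_i \mathsf{v}_i = \mathsf{r}_i^\intercal \bigl(\mathsf{E}_i^\intercal \mathsf{A}_i \mathsf{E}_i\bigr) \mathsf{r}_i \leq \mathsf{r}_i^\intercal \mathsf{S}_i \mathsf{r}_i = s_i(r_i, r_i) \leq \epsilon\, a_i(u_h|_{K_i}, u_h|_{K_i}),
\]
so summing over $i$ and using \cref{lem:partition} yields $\sum_i \mathsf{v}_i^\intercal \mathsf{A}_i \mathsf{v}_i \leq \epsilon\, \mathsf{a}(u_h, u_h)$. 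For the coarse piece, write $u_0 = u_h - \sum_i \mathsf{R}_i^\intercal \mathsf{v}_i$ and use the semi-definite inequality $\mathsf{a}(x+y,x+y) \leq 2\mathsf{a}(x,x) + 2\mathsf{a}(y,y)$ to get
\[
\mathsf{v}_0^\intercal \mathsf{A}_0 \mathsf{v}_0 = \mathsf{a}(u_0, u_0) \leq 2\mathsf{a}(u_h, u_h) + 2\mathsf{a}\Bigl(\sum_i \mathsf{R}_i^\intercal \mathsf{v}_i,\, \sum_i \mathsf{R}_i^\intercal \mathsf{v}_i\Bigr).
\]
Replay the bounded-overlap bookkeeping from the proof of \cref{lem:for C_R}: Assumption A together with \cref{lem:A_i} gives $\mathsf{a}(\sum_i \mathsf{R}_i^\intercal \mathsf{v}_i, \sum_i \mathsf{R}_i^\intercal \mathsf{v}_i) \leq 4 C_\text{os} \sum_i \mathsf{v}_i^\intercal \mathsf{A}_i \mathsf{v}_i \leq 4 C_\text{os}\epsilon\, \mathsf{a}(u_h, u_h)$. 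Adding the two contributions yields $\mathsf{b}(\underline{\mathsf{v}}, \underline{\mathsf{v}}) \leq (2 + (8C_\text{os}+1)\epsilon)\, \mathsf{a}(u_h, u_h)$, which is the desired estimate.

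The remaining subtlety is verifying that $\underline{\mathsf{v}}$ can be taken in $\Image(\underline{\mathsf{B}})$. By \cref{lem:ker A_0}, $\ker(\underline{\mathsf{B}})$ is one-dimensional and $\mathcal{R}$ sends its generator to a constant vector in $\ker(\mathsf{A})$. The clean fix I expect is to subtract from $\mathsf{v}_0$ its Euclidean projection on $\ker(\mathsf{A}_0)$ — which leaves the $\mathsf{b}$-norm unchanged because $\mathsf{A}_0$ annihilates that direction — and then absorb the induced constant shift of $\mathcal{R}\underline{\mathsf{v}}$ back into the local components using a partition of unity subordinate to $\{K_i^m\}$. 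Assumption A bounds the number of affected local problems by $C_\text{os}$, so the extra contribution to $\mathsf{b}(\underline{\mathsf{v}}, \underline{\mathsf{v}})$ is a $C_\text{os}$-dependent, contrast-independent multiple of $\mathsf{a}(u_h, u_h)$ and can be absorbed into the same structural form of the bound. This image-projection step is where I expect the only real technical friction; the spectral core of the argument is otherwise a routine assembly of the approximation property, Assumption B, \cref{lem:partition}, and the bounded-overlap counting already used for \cref{lem:for C_R}.
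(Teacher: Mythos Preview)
Your core argument---the spectral splitting $u_h|_{K_i}=w_i+r_i$, the local estimate via Assumption~B and \cref{lem:partition}, and the coarse estimate via the $2(a+b)$ inequality together with the $4C_\text{os}$ overlap count borrowed from \cref{lem:for C_R}---is exactly the paper's proof, and it lands on the same constant $2+(8C_\text{os}+1)\epsilon$.

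The only divergence is the $\Image(\underline{\mathsf{B}})$ verification. The paper does \emph{not} perform any partition-of-unity redistribution; it simply asserts, citing \cref{lem:ker A_0}, that the constructed $\underline{\mathsf{v}}=[\mathsf{w},\mathsf{E}_1\mathsf{r}_1,\dots,\mathsf{E}_N\mathsf{r}_N]$ already lies in $\Image(\underline{\mathsf{B}})$ whenever $\mathsf{u}\in\Image(\mathsf{A})$. Since the local blocks $\mathsf{A}_i$ are invertible, the only constraint is $\mathsf{w}\in\Image(\mathsf{A}_0)=\ker(\mathsf{A}_0)^\perp$, and the paper treats this as immediate from the structure of $\ker(\mathsf{A}_0)$. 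Your proposed alternative---project $\mathsf{v}_0$ onto $\Image(\mathsf{A}_0)$ and then push the resulting global constant back into the local components via a partition of unity subordinate to $\{K_i^m\}$---is unnecessary, and in fact would damage the result: adding $c\chi_i$ to each $\mathsf{v}_i$ contributes terms like $\mathsf{A}_i$-energies of the cutoff functions $\chi_i$, which bring in mesh- and overlap-dependent constants (of order $H/(mh)$ type) rather than the clean $C_\text{os},\epsilon$ dependence in the stated bound. So your worry about ``technical friction'' is self-inflicted; the paper's route avoids it entirely by not modifying $\underline{\mathsf{v}}$ at all.
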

\begin{proof}
We decompose $\mathsf{C}_i \mathsf{u}$ as $\mathsf{C}_i\mathsf{u}=\mathsf{w}_i+\mathsf{r}_i$, where the function representation of $\mathsf{w}_i$ belongs to $W_H^\text{c}(K_i)$. According to the approximation property of eigenspaces, we can find a $\mathsf{w}_i\in \Real^{n_i}$ such that $\mathsf{s}_i(\mathsf{r}_i, \mathsf{r}_i)$ is minimized with an estimation
\begin{equation}\label{eq:C_T eigen esti}
\mathsf{s}_i(\mathsf{r}_i, \mathsf{r}_i)=\mathsf{r}_i \cdot \mathsf{S}_i\mathsf{r}_i \leq \epsilon \mathsf{a}_i(\mathsf{C}_i \mathsf{u}, \mathsf{C}_i \mathsf{u})= \epsilon \mathsf{u}\cdot \mathsf{C}_i^\intercal \hat{\mathsf{A}}_i\mathsf{C}_i \mathsf{u}.
\end{equation}
Recalling that $\mathsf{u}=\sum_{i=1}^{N}\mathsf{C}_i^\intercal \mathsf{C}_i \mathsf{u}$, we have
\[
\mathsf{u}=\sum_{i=1}^{N}\mathsf{C}_i^\intercal \RoundBrackets*{\mathsf{w}_i + \mathsf{r}_i} = \sum_{i=1}^{N} \mathsf{C}_i^\intercal\mathsf{w}_i + \sum_{i=1}^{N}\mathsf{R}_i^\intercal \mathsf{r}_i',
\]
where $\mathsf{r}_i'=\mathsf{E}_i\mathsf{r}_i$. Moreover, from constructions of the global coarse space $W_H^\text{c}$ and $\mathsf{R}_0$, there exists a $\mathsf{w}\in \Real^{N^\text{c}}$ such that $\sum_{i=1}^{N} \mathsf{C}_i^\intercal\mathsf{w}_i=\mathsf{R}_0^\intercal \mathsf{w}$. We hence derive a vector $\underline{\mathsf{v}}\in \underline{V}$ as
\[
\underline{\mathsf{v}} =[\mathsf{w}, \mathsf{r}_1',\dots,\mathsf{r}_N']
\]
that satisfies $\mathsf{u}=\mathcal{R}\underline{\mathsf{v}}$. Thanks to \cref{lem:ker A_0}, it is easy to check that if $\mathsf{u}\in \Image(\mathsf{A})$, the constructed vector $\underline{\mathsf{v}}$ belongs to $\Image(\underline{\mathsf{B}})$.

Note that
\[
\mathsf{b}(\underline{\mathsf{v}}, \underline{\mathsf{v}})=\mathsf{w}\cdot \mathsf{A}_0\mathsf{w}+\sum_{i=1}^{N}\mathsf{r}_i'\cdot \mathsf{A}_i\mathsf{r}_i'=\mathsf{a}(\mathsf{R}_0^\intercal \mathsf{w}, \mathsf{R}_0^\intercal \mathsf{w}) + \sum_{i=1}^{N}\mathsf{r}_i\cdot \mathsf{E}_i^\intercal\mathsf{A}_i\mathsf{E}_i\mathsf{r}_i.
\]
We first need to bound $\mathsf{a}(\mathsf{R}_0^\intercal \mathsf{w}, \mathsf{R}_0^\intercal \mathsf{w})$. Employing techniques in the proof of \cref{lem:for C_R}, we have
\[
\begin{aligned}
\mathsf{a}(\mathsf{R}_0^\intercal \mathsf{w}, \mathsf{R}_0^\intercal \mathsf{w}) \leq& 2 \CurlyBrackets*{\mathsf{a}(\mathsf{u}, \mathsf{u})+\mathsf{a}(\sum_{i}^N \mathsf{R}_i^\intercal \mathsf{r}_i', \sum_{i}^N \mathsf{R}_i^\intercal \mathsf{r}_i')} \\
\leq& 2\CurlyBrackets*{\mathsf{a}(\mathsf{u}, \mathsf{u})+4C_\text{os}\sum_{i=1}^{N}\mathsf{r}_i'\cdot \mathsf{A}_i\mathsf{r}_i'} \\
=&2\CurlyBrackets*{\mathsf{a}(\mathsf{u}, \mathsf{u})+4C_\text{os}\sum_{i=1}^{N}\mathsf{r}_i\cdot \mathsf{E}_i^\intercal\mathsf{A}_i\mathsf{E}_i\mathsf{r}_i}.
\end{aligned}
\]
Combining Assumption B with \cref{eq:C_T eigen esti}, we arrive at
\[
\begin{aligned}
\sum_{i=1}^{N}\mathsf{r}_i\cdot \mathsf{E}_i^\intercal\mathsf{A}_i\mathsf{E}_i\mathsf{r}_i\leq & \sum_{i=1}^{N} \mathsf{r}_i \cdot \mathsf{S}_i\mathsf{r}_i \leq \epsilon\sum_{i=1}^{N} \mathsf{u}\cdot \mathsf{C}_i^\intercal \hat{\mathsf{A}}_i \mathsf{C}_i \mathsf{u} \\
\leq & \epsilon \mathsf{u}\cdot \mathsf{A}\mathsf{u} = \epsilon \mathsf{a}(\mathsf{u}, \mathsf{u}),
\end{aligned}
\]
where the last line is from \cref{lem:partition}. Therefore, we derive that
\[
\mathsf{b}(\underline{\mathsf{v}}, \underline{\mathsf{v}}) \leq 2\CurlyBrackets*{1+4C_\text{os}\epsilon}\mathsf{a}(\mathsf{u},\mathsf{u})+\epsilon\mathsf{a}(\mathsf{u},\mathsf{u}) \leq \CurlyBrackets*{2+(8C_\text{os}+1)\epsilon}\mathsf{a}(\mathsf{u},\mathsf{u}),
\]
which leads the completion of the proof.
\end{proof}

Combining \cref{lem:for C_R,lem:for C_T,lem:ker A_0}, we reach the main theorem of this section.
\begin{theorem}\label{thm:main}
Under Assumption A and B, the inequality
\[
\Cond(\mathsf{P}^{-1}\mathsf{A}) \leq \RoundBrackets*{2+(8C_\text{os}+1)\epsilon}\RoundBrackets*{1+4C_\text{os}}
\]
holds.
\end{theorem}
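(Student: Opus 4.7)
The plan is to assemble the main estimate as a direct application of the Fictitious Space Lemma (\cref{lem:fsl}) to the specific $\mathsf{b}(\cdot,\cdot)$ and $\mathcal{R}$ constructed in \cref{eq:b_bilinear,eq:R}. The authors have already observed that, with these choices, $\mathcal{R}\underline{\mathsf{B}}^\dagger\mathcal{R}^*\mathsf{A} = \mathsf{P}^{-1}\mathsf{A}$, so the lemma's output is precisely a spectral bound on the preconditioned operator.

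First I would verify the structural hypothesis of \cref{lem:fsl}, namely that $\mathcal{R}$ maps $\ker(\underline{\mathsf{B}})$ into $\ker(\mathsf{A})$. Since every local matrix $\mathsf{A}_i$ is invertible, $\ker(\underline{\mathsf{B}})$ consists only of vectors of the form $[\mathsf{w},\mathsf{0},\dots,\mathsf{0}]$ with $\mathsf{w}\in\ker(\mathsf{A}_0)$, and \cref{lem:ker A_0} delivers exactly the required inclusion. Next I would collect the two constants: \cref{lem:for C_R} supplies $C_R = 1 + 4C_\text{os}$ for the upper inequality \cref{eq:fsl2}, while \cref{lem:for C_T} supplies $C_T = \RoundBrackets*{2+(8C_\text{os}+1)\epsilon}^{-1}$ for the lower inequality \cref{eq:fsl1}, together with the existence of the decomposition $\mathsf{u}=\mathcal{R}\underline{\mathsf{v}}$ with $\underline{\mathsf{v}}\in\Image(\underline{\mathsf{B}})$.

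Plugging these into \cref{lem:fsl} yields, for every $\mathsf{u}\in\Real^n$ with $\mathsf{a}(\mathsf{u},\mathsf{u})>0$, the two-sided estimate $C_T\,\mathsf{a}(\mathsf{u},\mathsf{u}) \leq \mathsf{a}(\mathsf{P}^{-1}\mathsf{A}\mathsf{u},\mathsf{u}) \leq C_R\,\mathsf{a}(\mathsf{u},\mathsf{u})$. The final step is translating this $\mathsf{a}$-Rayleigh-quotient sandwich into the condition-number bound $\Cond(\mathsf{P}^{-1}\mathsf{A}) \leq C_R/C_T$. Because $\ker(\mathsf{A})=\ker(\mathsf{P}^{-1}\mathsf{A})$ is one-dimensional (spanned by the constant vector) and because $\mathsf{P}^{-1}\mathsf{A}$ is self-adjoint with respect to $\mathsf{a}(\cdot,\cdot)$ on $\Image(\mathsf{A})$, its nonzero eigenvalues coincide with the nonzero singular values appearing in the generalized definition $\Cond(\cdot)=\sigma_{\max}/\sigma_2$ of \cite{Brown1997}, and they all lie in $[C_T, C_R]$. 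Multiplying through then gives exactly $\Cond(\mathsf{P}^{-1}\mathsf{A}) \leq (1+4C_\text{os})\RoundBrackets*{2+(8C_\text{os}+1)\epsilon}$.

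The main obstacle I expect is the bookkeeping at the last step: one must be careful that the quantity bounded by \cref{lem:fsl} is really the extremal nonzero singular values rather than some other functional of $\mathsf{P}^{-1}\mathsf{A}$. Establishing $\mathsf{a}$-self-adjointness of $\mathsf{P}^{-1}\mathsf{A}$ (using symmetry of $\mathsf{A}$, $\mathsf{A}_0$, and each $\mathsf{A}_i$) and $\Image(\mathsf{A})$-invariance of the preconditioned operator is routine but requires the singularity of $\mathsf{A}$ to be handled explicitly through the pseudoinverse $\mathsf{A}_0^\dagger$; once this is in place, the rest of the argument is a direct substitution of the constants from \cref{lem:for C_R,lem:for C_T}.
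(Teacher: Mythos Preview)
Your proposal is correct and matches the paper's approach exactly: the paper simply states that combining \cref{lem:for C_R}, \cref{lem:for C_T}, and \cref{lem:ker A_0} (fed into the fictitious space lemma \cref{lem:fsl} with the choices \cref{eq:b_bilinear,eq:R}) yields the theorem. Your additional remarks on the $\mathsf{a}$-self-adjointness of $\mathsf{P}^{-1}\mathsf{A}$ and the identification of the nonzero singular values with the $\mathsf{a}$-eigenvalues fill in details that the paper leaves implicit in its earlier assertion that $\Cond(\mathsf{P}^{-1}\mathsf{A})\leq C_R/C_T$, but the overall route is the same.
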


\section{Numerical experiments}\label{sec:num}
We implement our method with PETSc \cite{Balay2022a}. We set the unit cube $(0, 1)^3$ as the computational domain with uniform structured meshes for all experiments (expect for the SPE10 problem). The domain decomposition is handled by PETSc's DMDA module, i.e., each MPI process owns a non-overlapping rectangular cuboid region plus $m$ layers of neighboring ghost points for inter-process communications. Meanwhile, each cuboid is further divided into $\mathtt{sd}$ parts in $x$-, $y$- and $z$-direction, which gives a non-overlapping decomposition of the whole domain into total $\mathtt{sd}^3 \times \mathtt{proc}$ subdomains (also called coarse elements), where $\mathtt{proc}$ stands for the number of MPI processes, and this also signifies $N=\mathtt{sd}^3 \times \mathtt{proc}$. Note that in our computing settings, coarse elements are not of the same size, whilst $H$ the minimal size could be roughly estimated as $1.0 / (\mathtt{sd} \times \sqrt[3]{\mathtt{proc}})$. In each coarse element, we solve the eigenvalue problem \cref{eq:spepb} with SLEPc \cite{Hernandez2005} and obtain $L_\star$ eigenvectors. For local and global solvers, we take CHOLMOD \cite{Chen2008} to factorize $\mathsf{A}_i$ and MUMPS \cite{Amestoy2001} to factorize $\mathsf{A}_0$.

We place $4$ long singular sources at $4$ corner points on $x\times y$-plane and an opposite sign long singular source at the middle of $x\times y$-plane as the source term $f$ of the model \cref{eq:orgional_equation}, which mimics the well condition in the SPE10 model (ref. \cite{Christie2001}). We set the default parameters for the GMRES solver provided by PETSc to solve the linear algebraic system \cref{eq:fine_system}, and the convergence criterion is that $l^2$-norm of the residual reaches $10^{-5}$. All computational elapsed time records in numerical experiments are measured from an HPC cluster that each computing node is configured with dual Intel\textsuperscript{\textregistered} Xeon\textsuperscript{\textregistered} Gold 6140 CPUs (36 cores) and 192GB memory. In the following reports, We denote by $\mathtt{iter}$ the number of GMRES iterations, $\mathtt{DoF}$ the number of total DoF.

We address several different settings in numerical experiments compared with Assumption B in \cref{sec:anal}: first, we preset a fixed number $L_\star$ of eigenvectors for all eigenproblems; second, we simply choose $\tilde{\kappa}=\kappa$ rather than the sophisticated construction in \cref{eq:tilde_kappa}. We explain those modifications here. An important topic in designing parallel algorithms is the so-called load balancing. In our situation, \cref{thm:main} implies that determining $L_i$ the number of eigenvectors adaptively according to the threshold for eigenvalues may achieve a theoretical bound for the condition number. However, for some challenging permeability profiles, the variance of $\CurlyBrackets{L_i}_{i=1}^N$ may be considerably large, which causes a load imbalance among processes and will tremendously degrade the overall performance. Moreover, thanks to the non-overlapping property, we could directly construct $L_\star$ global vectors defined on the fine mesh for storing all eigenvectors (coarse bases). Then, we can directly leverage the functionality from DMDA for handling communications in building the global solver $\mathsf{A}_0$, which greatly facilitates our implementation. For choosing $\tilde{\kappa}=\kappa$, recalling in \cref{eq:tilde_kappa}, we obtain an expression for $\tilde{\kappa}|_\tau$ with $\kappa_{e_1}$, $\kappa_{e_2}$, $\kappa_{e_3}$ and $\kappa_{e_4}$, and we can show that $\kappa_{e_1}$, $\kappa_{e_2}$, $\kappa_{e_3}$ and $\kappa_{e_4}$ can all be bounded by $2\mathtt{k}_{p,q}$ which is $2\kappa|_\tau$. This observation implies that by taking $\tilde{\kappa}=C\kappa$ with a modest large constant $C$, the inequality $\mathsf{E}_i^\intercal \mathsf{A}_i \mathsf{E}_i \lesssim \mathsf{S}_i$ is obtainable. Since we only require a fixed number of eigenvectors, the constant $C$ does not affect the resulting eigenspace (also called the local coarse space). Again, taking $\tilde{\kappa}=\kappa$ is easier to implement in contrast to \cref{eq:tilde_kappa}, because the latter needs to cook up different formulas depending on whether the fine element touches $\partial K_i$.

\subsection{Contrast robustness}
In this subsection, we will test our preconditioner with two types of media,  that is, one contains long channels and the other contains fractures. We will fix the coefficient $\kappa=1$ in the background region, while increasing values of $\kappa$ in channels and fractures to examine robustness w.r.t. contrast ratios. All models have the same resolution as $\mathtt{DoF}=512^3$ and are solved with the same number of MPI processes as $\mathtt{proc}=8^3$.

In the first type of media, we place long channels that penetrate the whole domain in $y$-direction, that is $\kappa(x, y, z)=\kappa(x, 0, z)$ for all $y$. The resolution of $x\times z$-plane is of $512\times 512$, and we arrange periodic cells that are in a size of $16\times 16$ along $x$- and $z$-direction. The permeability field $\kappa$ is determined by two parameters $\alpha$ and $\mathtt{cr}$ with $\kappa=10^{\alpha\times \mathtt{cr}}$, where distributions of $\alpha$ in a periodic cell are demonstrated in the left part of \cref{fig:channels}. We consider four different configurations---``2-ch. cfg.'', ``3-ch. cfg.'', ``4-ch. cfg.'' and ``5-ch. cfg.''---which are named by the number of channels inside a periodic cell. The number of GMRES iterations $\mathtt{iter}$ against $L_\star$ under different $\mathtt{cr}$ is plotted in the right part of \cref{fig:channels}, and note that we set $m=3$ and $\mathtt{sd}=4$ for those tests. From the graphs, we could see that $\mathtt{iter}$ can remain robust by enriching coarse spaces. For example, for each configuration, when $L_\star=6$ or $7$, all values of $\mathtt{iter}$ are all close with contrast ratios ranging from $10^3\sim 10^6$. Moreover, we could observe that problems are tougher with more channels (e.g., for ``4-ch. cfg.'' and ``5-ch. cfg.'' with $\mathtt{cr}=12$, the solver cannot finish within $1000$ iterations). Again, our preconditioner can achieve robust performance by incorporating a modest number of eigenvectors ($L_\star=6$) into coarse spaces.

\begin{figure}[tbhp]
\centering
\includegraphics[width=\textwidth]{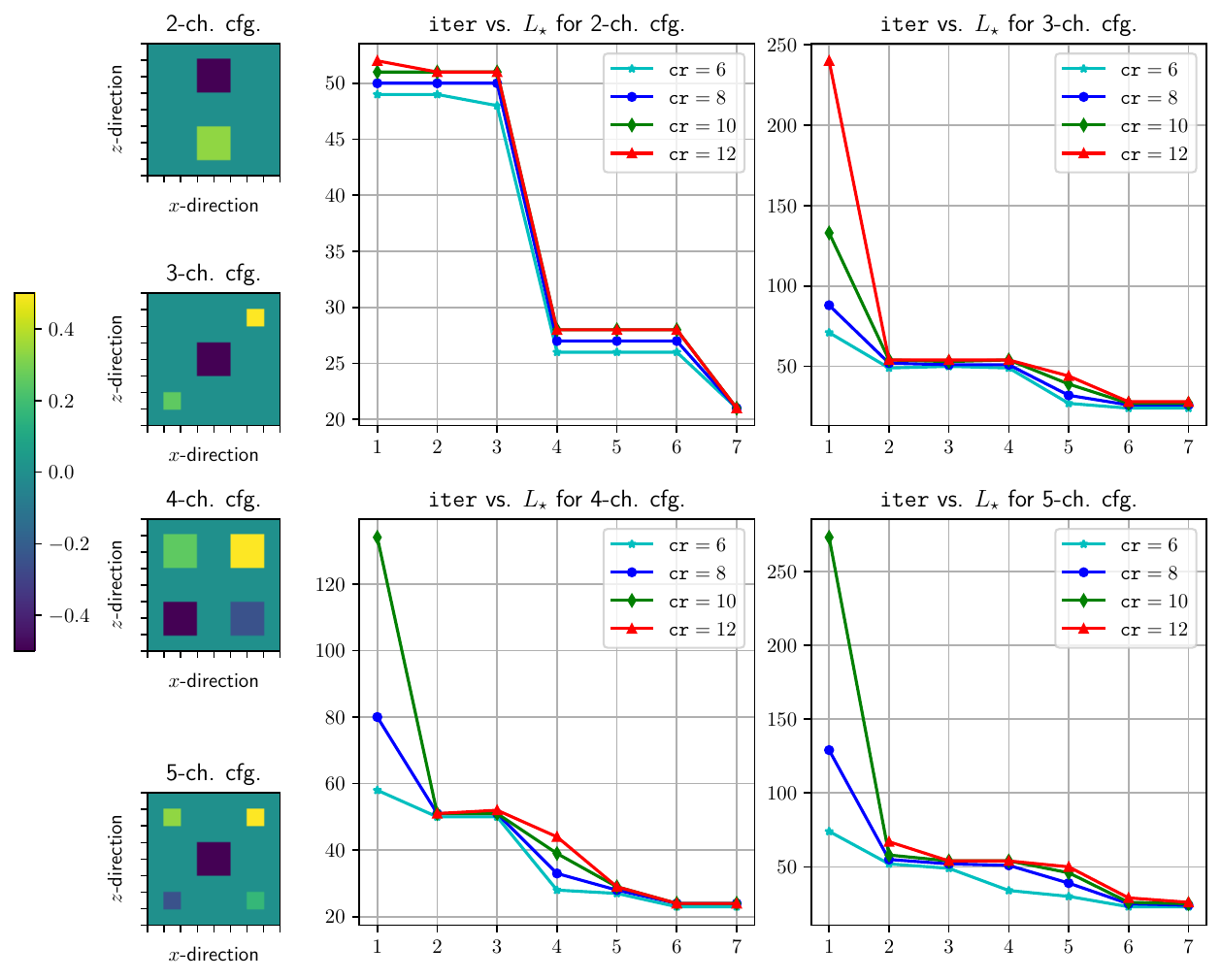}
\caption{(\textbf{left part}) the distributions of $\alpha$ for different configurations in a periodic cell, (\textbf{right part}) The number of GMRES iterations $\mathtt{iter}$ against $L_\star$ under different $\mathtt{cr}$.}
\label{fig:channels}
\end{figure}

The second type of media are visualized in the left part of \cref{fig:fracture} that contains two configurations, and the original models are in a resolution of $256^3$ while we periodically duplicate them into $512^3$ the resolution of computational domains. Note that in \cref{fig:fracture}, blue regions refer to fractures, where we will place significantly higher permeability as $\kappa=10^\mathtt{cr}$ in to reflect that flows move fast in fractures, and we simply fixed $\kappa=1$ in red regions. We emphasized that those models are, indeed, challenging to solve. To support this assertion, we adopt ``gamg''---the default algebraic multigrid preconditioner by PETSc---to GMRES and record $\mathtt{iter}$ and elapsed times in \cref{tab:gamg}. It is found that the performance of ``gamg'' gradually deteriorates as increasing $\mathtt{cr}$, and there are errors of breakdown appearing when $\mathtt{cr}=6$ and $8$. However, our preconditioner exhibits robustness w.r.t. $\mathtt{cr}$ from the data in \cref{tab:gamg} with a set of parameters $(L_\star, m, \mathtt{sd})=(4, 2, 4)$. The right part of \cref{fig:fracture} shows $\mathtt{iter}$ against $L_\star$ under different $\mathtt{cr}$ for two fracture configurations with $m=3$ and $\mathtt{sd}=4$, where the contrast ratios in those tests range from $1$ to $10^{10}$. The plots again illustrate that the number of iterations decreases as expected with increasing the dimension of coarse spaces, and a modest large $L_\star$ is necessary to achieve robustness for those high-contrast problems.

\begin{figure}[tbhp]
\centering
\includegraphics[width=\textwidth]{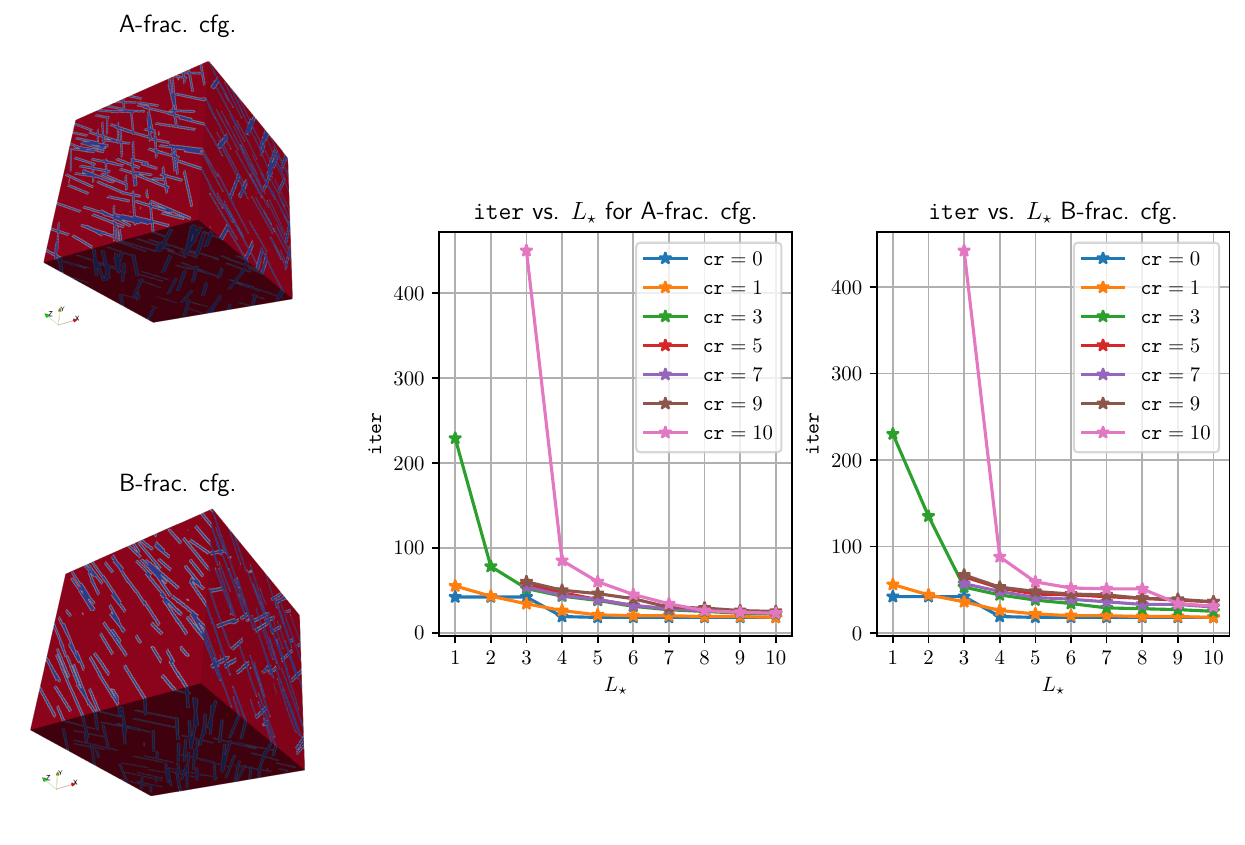}
\caption{(\textbf{left part}) two binary-valued data sets of a size of $256^3$, where the regions of blue color represent fractures, and the whole computational domain $512^3$ is constituted by periodically duplicating a $256^3$ configuration; (\textbf{right part}) plots of $\mathtt{iter}$ w.r.t. different $\mathtt{en}$ under different $\mathtt{cr}$.}
\label{fig:fracture}
\end{figure}

\begin{table}[tbhp]
\footnotesize
\caption{Iteration numbers and elapsed times (seconds) in a format $\mathtt{iter}(\text{time})$ w.r.t. different $\mathtt{cr}$, where ``gamg'' stands for PETSc's default algebraic multigrid preconditioner and ``$-$'' for breakdown error occurring, and the coefficient profile used is from ``B-frac. cfg.'' in \cref{fig:fracture}.}
\label{tab:gamg}
\centering
\begin{tabular}{|c|c|c|c|c|c|}
\hline
$\mathtt{cr}$ & $0$ & $2$ & $4$ & $6$ & $8$ \\
\hline
gamg & $9(27.8)$ & $18(33.1)$ & $99(77.8)$ & $-$ & $-$ \\
\hline
$(L_\star, m, \mathtt{sd})=(4, 2, 4)$ & $23(40.8)$ & $44(61.1)$ & $55(77.2)$ & $60(81.5)$ & $55(86.8)$ \\
\hline
\end{tabular}
\end{table}

\subsection{Scalability}
In this subsection, we fix $\mathtt{sd}=4$, $m=3$ and $L_\star=4$ while change $\mathtt{DoF}$ and $\mathtt{proc}$. Moreover, the permeability fields are generated from the periodical cell ``2-ch. cfg.'' in \cref{fig:channels} with $\mathtt{cr}=6$. Compared to \cref{alg:preconditioner}, we divide the elapsed time of obtaining solutions into three parts: Pre-0 (construct and factorize $\mathsf{A}_i$, solve local eigenvalue problems), Pre-1 (construct and factorize $\mathsf{A}_0$) and Ite  (the iteration phase). The results of strong and weak scalability tests are presented in the left and right part of \cref{fig:scal} respectively. For examining strong scalability, we set $\mathtt{DoF}=512^3$ and increase $\mathtt{proc}$ from $6^3$ to $10^3$, and a three-fold acceleration can be observed. Recall that our domain decomposition is related to the number of processes and $H \sim 1.0 / (\mathtt{sd} \times \sqrt[3]{\mathtt{proc}})$, we can also notice a decrease in $\mathtt{iter}$ with a large $\mathtt{proc}$, and this phenomenon agrees with traditional theories that a smaller $H$ can result in a better condition number. To test weak scalability, we fix the ratio between $\mathtt{DoF}$ and $\mathtt{proc}$, and the performance of $\mathtt{DoF}=640^3$ degrades to $70\%$ of $\mathtt{DoF}=256^3$, while $\mathtt{iter}$ remains unchanged. The bottleneck of our method for obtaining better scalability performance is global solvers, or more precisely, factorize $\mathsf{A}_0$. We could see this argument more clearly from the elapsed times in the Pre-1 rows in \cref{fig:scal}, where both cases show a dramatic increase with more MPI processes. Tuning direct solvers for sparse linear systems is a well-known difficulty \cite{wang2011}, and a remedy (also in our future work) could be replacing the factorization of $\mathsf{A}_0$ with an in-exact but scalable solver.

\begin{figure}[tbhp]
\centering
\includegraphics[width=\textwidth]{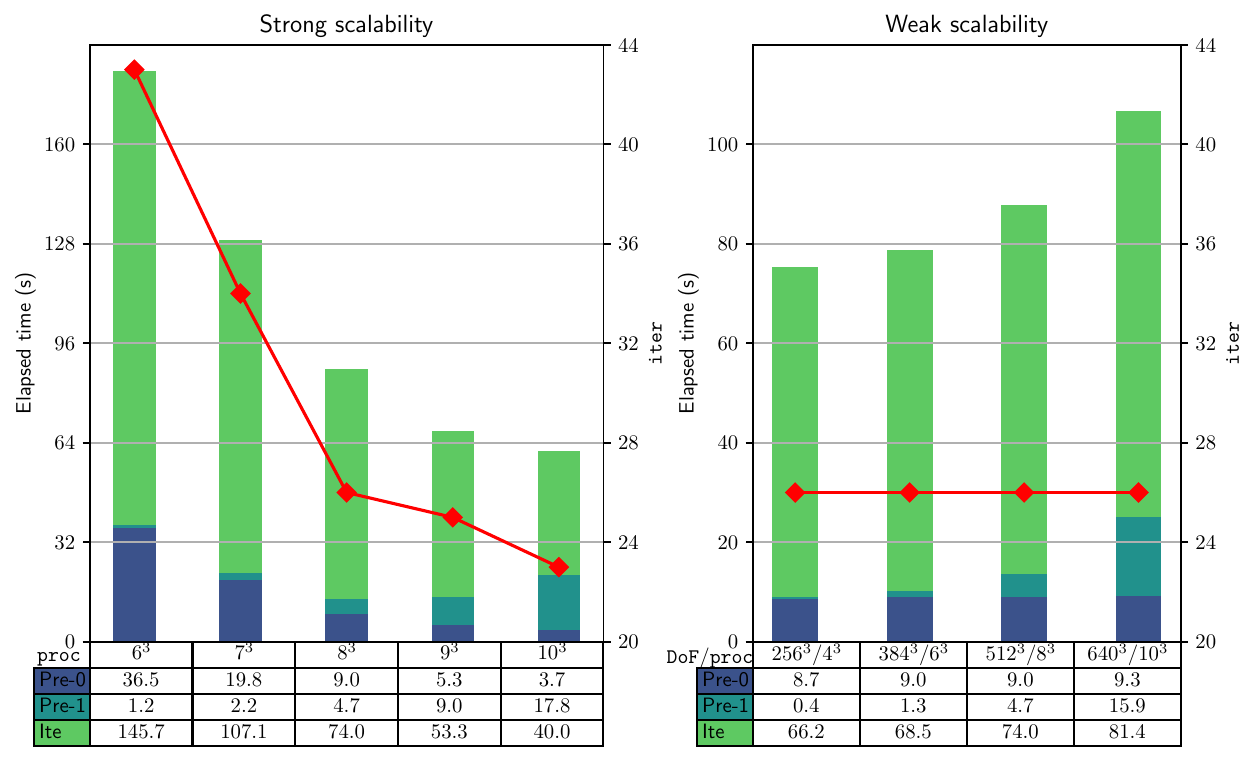}
\caption{(\textbf{left part}) elapsed times and iteration numbers in strong scalability tests with $\mathtt{DoF}=512^3$, (\textbf{right part}) elapsed times and iteration numbers in weak scalability tests with a fixed ratio between $\mathtt{DoF}$ and $\mathtt{proc}$.}
\label{fig:scal}
\end{figure}

\subsection{Parameter tests}
In this subsection, we fix $\mathtt{DoF}=512^3$, $\mathtt{proc}=8^3$ to study the influence of $\mathtt{sd}$, $m$ and $L_\star$ on the solver. The permeability field $\kappa$ is constructed from ``B-frac. cfg.'' in \cref{fig:fracture} with $\mathtt{cr}=6$. Although \cref{thm:main} does not depict how the number of oversampling layers $m$ affects the condition number, \cref{tab:diff m} indeed confirms that $\mathtt{iter}$ could be smaller with a larger $m$. However, as we have mentioned before, a larger $m$ means more communications and will degrade the final performance, which could also be validated from the elapsed times in \cref{tab:diff m} that the best time record occurs with $m=1$ and $L_\star=4$.

\begin{table}[tbhp]
\footnotesize
\caption{Iteration numbers and elapsed times (seconds) in a format $\mathtt{iter}(\text{time})$ w.r.t. different $m$ and $L_\star$ with $\mathtt{sd}=4$. }
\label{tab:diff m}
\centering
\begin{tabular}{|c|c|c|c|c|c|}
\hline
$L_\star$ & $3$ & $4$ & $5$ & $6$ & $7$ \\
\hline
$m=1$ & $107(89.6)$ & $83(80.9)$ & $80(87.3)$ & $73(93.7)$ & $58(107.5)$ \\
\hline
$m=2$ & $78(94.7)$ & $60(81.5)$ & $54(100.0)$ & $50(86.7)$ & $47(91.3)$ \\
\hline
$m=3$ & $68(120.9)$ & $53(105.6)$ & $47(102.6)$ & $44(103.5)$ & $42(110.3)$ \\
\hline
$m=4$ & $56(160.6)$ & $48(138.2)$ & $42(130.8)$ & $39(134.8)$ & $37(134.1)$ \\
\hline
\end{tabular}
\end{table}

Note that in our settings, we introduce $\mathtt{sd}$ to further divide the computational domain owned by each process. If we just set $\mathtt{sd}=1$ for restricting the time consumed on local solvers, we are left to use more MPI processes, which usually is unaffordable due to limited computational resources. The results of experiments on $\mathtt{sd}$ are presented in \cref{tab:diff sd}. Just as the strong scalability test in \cref{fig:scal}, a larger $\mathtt{sd}$ means a smaller $H$, which decreases the number of iterations. However, a larger $\mathtt{sd}$ can increase the size of $\mathsf{A}_0$, and the final performance will be burdened with global solvers. The key observation  from \cref{tab:diff sd} is that to efficiently implement two-level preconditioners, balancing the efforts on local and global solvers is crucial.

\begin{table}[tbhp]
\footnotesize
\caption{Iteration numbers and elapsed times (seconds) in a format $\mathtt{iter}(\text{time})$ w.r.t. different $\mathtt{sd}$ and $L_\star$ with $m=3$.}
\label{tab:diff sd}
\centering
\begin{tabular}{|c|c|c|c|c|c|}
\hline
$L_\star$ & $3$ & $4$ & $5$ & $6$ & $7$ \\
\hline
$\mathtt{sd}=1$ & $-$ & $173(495.4)$ & $153(510.9)$ & $138(442.0)$ & $133(429.0)$ \\
\hline
$\mathtt{sd}=2$ & $-$ & $130(186.6)$ & $127(189.4)$ & $68(123.1)$ & $64(121.1)$ \\
\hline
$\mathtt{sd}=4$ & $68(120.9)$ & $53(105.6)$ & $47(102.6)$ & $44(103.5)$ & $42(110.3)$ \\
\hline
$\mathtt{sd}=8$ & $28(185.0)$ & $24(202.1)$ & $22(268.1)$ & $22(273.1)$ & $22(331.3)$ \\
\hline
\end{tabular}
\end{table}

Next, we consider the SPE10 problem \cite{Christie2001}, which is a widely used benchmark model for testing various model reduction methods, and the adopted permeability field is visualized in the left part of \cref{fig:spe10}. 
This model exhibits  strong heterogeneity with a magnitude of $10^7$. The computed pressure field is demonstrated in the right part of \cref{fig:spe10}. The $\mathtt{DoF}$ in this model is around $1$ million, which is relatively small in comparison with previous experiments, and we just report several results with different $L_\star$ in \cref{tab:spe10}. We could observe again that a larger $L_\star$ can bring a smaller $\mathtt{iter}$, while the elapsed times may get improved by exploring other settings of $\mathtt{proc}$, $m$ and $\mathtt{sd}$.

\begin{figure}[tbhp]
\centering
\includegraphics[width=\textwidth]{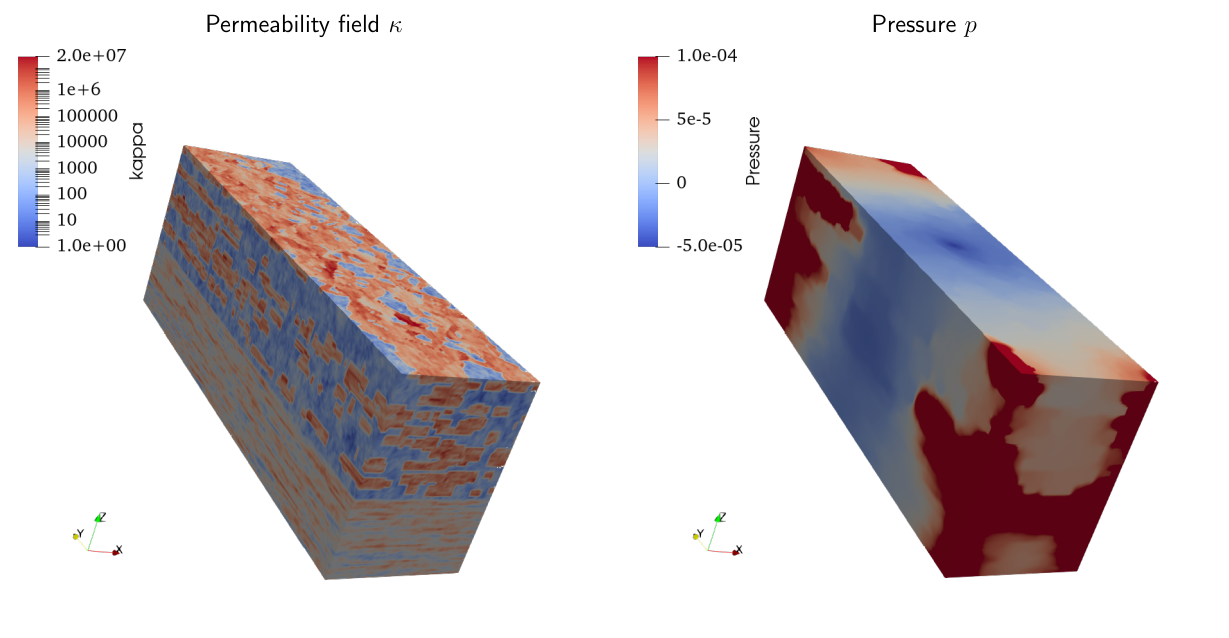}
\caption{(\textbf{left part}) the coefficient profile of the SPE10 model, (\textbf{right part}) The pressure distribution.}
\label{fig:spe10}
\end{figure}

\begin{table}[tbhp]
\footnotesize
\caption{Iteration numbers and elapsed times (seconds) in a format $\mathtt{iter}(\text{time})$ w.r.t. different $L_\star$ for the SPE10 model with $\mathtt{proc}=56$, $m=3$ and $\mathtt{sd}=2$.}
\label{tab:spe10}
\centering
\begin{tabular}{|c|C{3.5em}|C{3.5em}|C{3.5em}|C{3.5em}|C{3.5em}|C{3.5em}|C{3.5em}|}
\hline
$L_\star$ & $1$ & $2$ & $3$ & $4$ & $5$ & $6$ & $7$ \\
\hline
$\mathtt{iter}(\text{wall time})$ & $ - $ & $ - $ & $ - $ & $49(6.5)$ & $39(5.4)$ & $33(4.8)$ & $29(4.3)$ \\
\hline
\end{tabular}
\end{table}

\section{Conclusion}
In this paper, we developed a two-level overlapping preconditioner for accelerating iterative solvers for linear algebraic systems derived from discretizing Darcy flow. To prepare for practical two-phase flow simulations, the discretization that we focus on is based on the mixed form that is superior in mass conservation compared with the common second-order form, and the no-flux boundary condition is also considered rather than for example, the Dirichlet boundary condition. Inheriting the methodology of GMsFEMs, we solve general eigenvalue problems on subdomains to obtain leading eigenvectors for constructing the coarse space. Our construction distinguishes itself from existing literature by working on non-overlapping coarse elements, which to the largest extent saves communications and guarantees parallel efficiency. Our manner of building local solvers is not a direct copy of additive Schwarz methods on Galerkin discretization, and actually, it has been explained to be suitable for parallel implementations. To handle the singularity from the no-flux boundary condition, we tweaked the fictitious space lemma and obtained an a priori estimation for the conditioner number of the preconditioner under several assumptions. Our implementation is facilitated by PETSc, and numerical reports support the claim that the proposed preconditioner is robust with high contrast coefficient profiles. Furthermore, the strong and weak scalability tests inspire a possible improvement---replacing the factorization in the global solver with an in-exact but scalable one--that will be provided in our future work.

\bibliographystyle{siamplain}
\bibliography{refs}
\end{document}